\numberwithin{equation}{section}
\def\RR{\mathbb{R}}
\def\SS{\mathbb{S}}
\def\NN{\mathbb{N}}
\def\SS{\mathbb{S}}
\def\to{\rightarrow}
\theoremstyle{plain}
\theoremstyle{plain}
\newtheorem{theorem}{Theorem} [section]
\newtheorem{lemma}[theorem]{Lemma}
\theoremstyle{definition}
\theoremstyle{remark}
\newtheorem{remark}[theorem]{Remark}
\numberwithin{theorem}{section}
\numberwithin{equation}{section}
\numberwithin{figure}{section}
\begin{document}
\title[]{Regularity of shadows and the geometry of the singular set associated to a Monge-Amp\`{e}re equation}
\author[E. Indrei and L. Nurbekyan]{E. Indrei and L. Nurbekyan}

\def\signei{\bigskip\begin{center} {\sc Emanuel Indrei\par\vspace{3mm}
MSRI\\  
17 Gauss Way\\
Berkeley, CA 94720\\
email:} {\tt eindrei@msri.org }
\end{center}}

\def\signln{\bigskip\begin{center} {\sc Levon Nurbekyan \par\vspace{3mm}
Center for Mathematical Analysis,\\
Geometry, and Dynamical Systems\\
Departamento de Matem\'atica\\
Instituto Superior T\'ecnico\\
Lisboa 1049-001, Portugal\\
email:} {\tt lnurbek@math.ist.utl.pt}
\end{center}}


\makeatletter
\def\blfootnote{\xdef\@thefnmark{}\@footnotetext}
\makeatother

\blfootnote{The first author acknowledges support from the Australian Research Council and US NSF grant DMS-0932078, administered by the Mathematical Sciences Research Institute in Berkeley, California. The second author acknowledges support from the department of mathematics at the University of Texas at Austin and the Centre for Mathematical Analysis, Geometry, and Dynamical Systems at Instituto Superior T\'ecnico.}

\date{}

\maketitle

\begin{abstract}
Illuminating the surface of a convex body with parallel beams of light in a given direction generates a shadow region. We prove sharp regularity results for the boundary of this shadow in every direction of illumination. Moreover, techniques are developed for investigating the regularity of the region generated by orthogonally projecting a convex set onto another. As an application we study the geometry and Hausdorff dimension of the singular set corresponding to a Monge-Amp\`{e}re equation.
\end{abstract}

\section{Introduction}
Shadows play an important role in many different branches of mathematics such as differential geometry,  convex analysis, geometric combinatorics, and functional analysis \cite{Gh, Cho, Bl, GS, Mar, Be, Ki, Arn}. Our aim in this paper is to show that they also naturally appear in a free boundary problem associated to a Monge-Amp\`ere equation. Indeed, it turns out that the regularity of certain shadow regions yields information on the Hausdorff dimension of the singular set appearing in the optimal partial transport problem \cite{CM, AFi, AFi2, I, CI}.

\subsection{Illumination shadows}
Illumination shadows form powerful tools in the classification of surfaces. For instance, it is a well-known fact that if every shadow boundary generated by parallel illumination on a Blaschke surface embedded in $\RR^3$ is a plane curve, then the surface is quadric \cite{Nom}. Moreover, in $2001$ Ghomi \cite{Gh} solved the shadow problem formulated in $1978$ by Wente: if $M$ is a closed oriented $2$-dimensional manifold and $f: M \rightarrow \RR^3$ is a smooth immersion, then $f$ is a convex embedding if and only if the shadow region generated by parallel illumination is simply connected in every direction. 

Regularity properties of shadow boundaries have been investigated in \cite{Gh, St, GS, Z, Hern, How}. For example, given a smooth manifold it is well known that if the Gaussian curvature does not vanish at a given point, then the shadow boundary is locally smooth around that point (via the inverse function theorem). Moreover, using Sard's theorem, it is not difficult to prove that in almost every direction (in the sense of Lebesgue), the shadow boundary of a surface is continuous. Steenarts \cite{St} showed that for a smooth convex body, the shadow boundary has finite $(n-2)$-dimensional Hausdorff measure in almost all directions (in the sense of Lebesgue). On the other hand, Gruber and Sorger \cite{GS} showed that when one considers the product space of convex bodies and directions $\mathcal{G} \times \mathbb{S}^{n-1}$, most pairs $(\Omega, u) \in \mathcal{G} \times \mathbb{S}^{n-1}$ (i.e. up to a meagre set in the sense of Baire) generate boundaries with infinite $(n-2)$-dimensional Hausdorff measure while having Hausdorff dimension $(n-2)$, see \cite[Theorems 1 \& 2]{GS}. These results suggest that in the general class of convex bodies, most shadow boundaries are highly irregular.  

Moreover, in applications one may need regularity results for the shadow boundary of a generic convex body in every direction. In this weak setting, the tools of differential geometry do not apply due to lack of regularity and ``almost everywhere" results in the sense of Lebesgue or Baire do not suffice since they may exclude a dense set of directions. In \S \ref{PaI}, we address this problem with tools from convex analysis and obtain the following results.\\

\begin{itemize}

\item{[Theorem \ref{thm: gammacontinuous}] For a strictly convex domain in $\RR^n$, the boundary of the shadow generated by parallel illumination is locally a continuous graph in every direction.}  
 
\vskip .1in 
 
\item{[\S \ref{sh2}] There exists a convex set and a direction so that the shadow boundary generated by parallel illumination is not locally a graph. In particular, one may not remove the strict convexity assumption in Theorem \ref{thm: gammacontinuous}.}   

\vskip .1in

\item{[Theorem \ref{hold}] For a uniformly convex $C^{1,\alpha}$ domain in $\RR^n$, $\alpha \in (0,1]$, the boundary of the shadow generated by parallel illumination is locally a $C^{0,\alpha}$ graph in every direction.}

\vskip .1in
 
\item{[\S \ref{sh1}] For every $\alpha \in (0,1]$, there exists a $C^\infty$ smooth convex set and a direction, so that the shadow boundary generated by parallel illumination in that direction is $C^\beta$ for $\beta<\alpha$. In particular, one may not remove the uniform convexity assumption in Theorem \ref{hold}.}
 
\vskip .1in

\item{[Remark \ref{sh3}] For a uniformly convex $C^{k+1}$ domain in $\RR^n$, $k\ge 1$, the boundary of the shadow generated by parallel illumination is locally a $C^k$ graph.}\\           

\end{itemize}

We note that shadows generated by another type of illumination process also appear in a well-known covering problem of Levi \cite{Lev} and Hadwiger \cite{Ha}: let $\Omega \subset \RR^n$ be a convex body and $h(\Omega)$ the smallest number so that $\Omega$ can be covered by $h(\Omega)$ smaller homothetical copies of itself; the conjecture states that $h(\Omega) \le 2^n$, with equality if and only if $\Omega$ is an $n$-parallelotope. Indeed, Boltyanskii \cite{Bolt} connected this conjecture with an illumination problem by showing that $h(\Omega)=l(\Omega)$ where $l(\Omega)$ is the smallest number of light sources outside of $\Omega$ required to illuminate $\partial \Omega$; a boundary point $y$ of $\Omega$ is said to be illuminated from $x \notin \Omega$ if the line through $x$ and $y$ intersects the interior of $\Omega$. For further reading, we refer the reader to two survey articles \cite{Be, Mar} and the references therein.

\subsection{Projection shadows}
In $1986$ Kiselman \cite{Ki} addressed the following question: what degree of smoothness does a two-dimensional projection of a three-dimensional smooth convex set possess? He proved that if the convex set is $C^1$, then its projection is also $C^1$; if it is $C^{2,1}$, then the boundary of the projection is twice differentiable; and, if it is real-analytic, then the boundary of the projection is $C^{2,\alpha}$ for some $\alpha>0$. Moreover, he provided examples to show that these results are essentially sharp: in the real-analytic case, the boundary of the projection may be exactly $C^{2,\frac{2}{q}}$ for any odd integer $q \ge 3$ \cite[Example  3.2]{Ki}, and the boundary of the shadow of a $C^\infty$ set may not be $C^2$ \cite[Example 3.3]{Ki}. V. Sedykh \cite{Sed} studied this question in higher dimensions and proved that the projection of a smooth closed convex surface in $\RR^{n}$ onto a hyperplane is $C^{1,1}$ and showed that this result is sharp in the sense that there exists a hypersurface whose shadow is not twice differentiable; this contrasts with Kiselman's result in $\RR^3$. Moreover, the analytic case also displays a loss of regularity in higher dimensions: Bogaevsky \cite{Bog}  showed the existence of a real-analytic closed convex hypersurface, whose shadow does not belong to the class $C^2$. These results are all compiled and discussed in the book ``Arnold's problems" by V.I. Arnold \cite{Arn} (Arnold calls these types of shadows ``apparent contours"). 

In applications, however, one may require regularity results of this shadow when projecting onto a strictly convex domain (as opposed to a hyperplane as in the results above), see e.g. \S \ref{MAP}. Indeed, this situation is quite different in the sense that the projection no longer occurs in just one direction, but in many different directions determined by the normal of the set onto which the projection takes place. Therefore, the variation of this normal dictates the regularity and geometry of the boundary of the projection and this requires a new approach in contrast with the affine case. Here is the precise statement of the problem: given two convex domains $\Omega \subset \RR^n$, $\Lambda \subset \RR^n$, if $P_\Lambda(\Omega)$ denotes the orthogonal projection of $\Omega$ onto $\Lambda$, then how smooth is $\partial(P_\Lambda(\Omega)\cap \partial \Lambda)$? The following results are established in \S \ref{convex shadow}.\\

\begin{itemize}

\item{[Theorem \ref{wc}] Let $\Omega \subset \mathbb{R}^n$ be a bounded strictly convex domain and $\Lambda \subset \mathbb{R}^n$ a convex domain whose boundary is $C^{1,1}$. If $\overline{\Omega} \cap \overline{\Lambda} = \emptyset$, then $\partial P_\Lambda(\Omega)$ is finitely $(n-2)$-rectifiable.}

\vskip .1in

\item{[Remark \ref{cant}]} The disjointness assumption in Theorem \ref{wc} is necessary: there exist two bounded convex domains $\Omega$ and $\Lambda$ in $\RR^2$ for which $\mathcal{H}^{0}(\partial(P_\Lambda(\Omega)\cap \partial \Lambda))=\infty$. 

\vskip .1in

\item{[Theorem \ref{sc}] If $\Omega$ and $\Lambda$ are $C^{k+1}$ convex domains in $\mathbb{R}^n$ with disjoint closures, $k\ge1$, and $\Omega$ is bounded and uniformly convex, then $\partial P_\Lambda(\Omega)$ is an $(n-2)$-dimensional $C_{loc}^k$ hypersurface.}\\

\end{itemize}

We point out that when one takes $\Lambda$ to be a hyperplane, Theorem \ref{wc} is immediate: the projection of a convex set onto a hyperplane is convex, so $\partial P_\Lambda(\Omega)$ is locally Lipschitz. However, the situation is different if $\Lambda$ is curved. Here is the idea of our method: we take a point $y \in \partial P_\Lambda(\Omega)$ and represent $\Lambda$ locally by a bi-Lipschitz graph with respect to the tangent space at $y$, $\mathbb{T}_y \Lambda = \mathbb{R}^{n-1}$. Then we consider $P_{\mathbb{R}^{n-1}} \partial P_\Lambda(\Omega)$ and cook up an auxiliary uniformly convex $C^{1,1}$ function that touches this set at $P_{\mathbb{R}^{n-1}}\partial P_\Lambda(y)$. By applying our results from Theorem \ref{hold} (or rather, the idea in the proof), we show that there exists a Lipschitz function which touches $P_{\mathbb{R}^{n-1}}\partial P_\Lambda(\Omega)$ at $P_{\mathbb{R}^{n-1}}\partial P_\Lambda(y)$ and bounds $P_{\mathbb{R}^{n-1}}\partial P_\Lambda(\Omega)$ from one side (in a suitable coordinate system). This yields the existence of a cone whose opening can be shown to depend only on the initial data (i.e. $\Omega$ and $\Lambda$) and that touches $P_{\mathbb{R}^{n-1}}\partial P_\Lambda(\Omega)$ only at $P_{\mathbb{R}^{n-1}}\partial P_\Lambda(y)$; the rest follows by iterating the argument above and locally transporting cones at all the other points in $P_{\mathbb{R}^{n-1}} \partial P_\Lambda(\Omega)$ from the surrounding tangent spaces via the $C^{1,1}$ charts representing $\Lambda$ and applying a standard covering argument from geometric measure theory. 

The idea of this argument in terms of finding a cone was employed by Indrei \cite{I}, although he assumed $\Lambda$ to be uniformly convex. The novelty in this paper is that we construct our barrier-type function without requiring uniform convexity of $\Lambda$. Indeed, this support function is constructed by using the boundary of the shadow generated by illuminating $\Lambda$ in the direction of some normal of $\Omega$ at the point $y + t(y)N_\Lambda(y)$, where $t(y)$ is the first hitting time of $\Omega$. However, in contrast with \cite[Proposition 4.9]{I}, we require a strict convexity assumption on $\Omega$. Nevertheless, this tradeoff turns out to be more useful when applying our theory to a free boundary problem that has a strict convexity assumption on $\Omega$ naturally built into it, see \S \ref{MAP}.          

On the other hand, the method we employ to prove Theorem \ref{sc}  is completely different. The starting point is that we may represent the sets $\partial \Omega$ and $\partial \Lambda$ locally as level sets of two convex functions functions $G:\RR^n \rightarrow \RR$ and $F: \RR^n \rightarrow \RR$. By exploiting the geometry of the problem, we construct a function $\phi: \RR^{2n+1} \rightarrow \RR^{n+3}$ so that $\partial P_\Lambda(\Omega)$ is locally a level set of $\phi$ (herein lies the novelty of our approach since we are connecting the two sets and the unknown shadow boundary by a single function); next, we compute the differential of this map and show that it has full rank and conclude via the implicit function theorem. 

\subsection{Shadows and a Monge-Amp\`{e}re equation} \label{MAP}

The optimal partial transport problem is a generalization of the classical Monge-Kantorovich problem: given two non-negative functions $f=f\chi_\Omega, \hskip .1in g=g\chi_\Lambda \in L^1(\mathbb{R}^n)$ and a number $0<m \leq \min\{||f||_{L^1}, ||g||_{L^1}\},$ the objective is to find an optimal transference plan between $f$ and $g$ with mass $m$. A transference plan is a non-negative, finite Borel measure $\gamma$ on $\mathbb{R}^n \times \mathbb{R}^n$, whose first and second marginals are controlled by $f$ and $g$ respectively: for any Borel set $A \subset \mathbb{R}^n$,
$$\gamma(A\times \mathbb{R}^n) \leq \int_{A} f(x)dx, \hskip .2in \gamma(\mathbb{R}^n \times A) \leq \int_{A} g(x)dx.$$
An optimal transference plan is a minimizer of the functional  
\begin{equation} \label{mini}
\gamma \rightarrow \int_{\mathbb{R}^n \times \mathbb{R}^n} c(x,y) d\gamma(x,y),
\end{equation}       
where $c$ is a non-negative cost function. Issues of existence, uniqueness, and regularity of optimal transference plans have been addressed by Caffarelli \& McCann \cite{CM}, Figalli \cite{AFi, AFi2}, Indrei \cite{I}, and Chen \& Indrei \cite{CI}.  

If $$||f \wedge g||_{L^1(\mathbb{R}^n)}\leq m \leq \min\{||f||_{L^1(\mathbb{R}^n)}, ||g||_{L^1(\mathbb{R}^n)}\} \large,$$ then by the results in \cite[Section 2]{AFi}, there exists a convex function $\Psi_m$ and non-negative functions $f_m$,  $g_m$ for which $$\gamma_m:=(Id \times \nabla \Psi_m)_{\#}f_m=(\nabla \Psi_m^* \times Id)_{\#}g_m,$$ is the unique solution of (\ref{mini}) and $\nabla {\Psi_m}_{\#}f_m=g_m$ (see \cite[Theorem 2.3]{AFi}).

$\Psi_m$ is known as the \textit{Brenier solution} of the Monge-Amp\`{e}re equation $$\operatorname{det}(D^2\Psi_m)(x)=\frac{f_m(x)}{g_m(\nabla \Psi_m(x))},$$ with $x \in F_m:=$ set of density points of $\{f_m>0\},$ and $\nabla \Psi_m(F_m) \subset G_m$:= set of density points of $\{g_m>0\}.$ Moreover, as in \cite[Remark 3.2]{AFi}, we set $$U_m:=(\Omega \cap \Lambda) \cup \bigcup_{(\bar x, \bar y) \in \Gamma_m} B_{|\bar x - \bar y|}(\bar y),$$

$$V_m:=(\Omega \cap \Lambda) \cup \bigcup_{(\bar x, \bar y) \in \Gamma_m} B_{|\bar x - \bar y|}(\bar x),$$ where $\Gamma_m$ is the set $$(Id \times \nabla \Psi_m)(F_m \cap D_{\nabla \Psi_m}) \cap (\nabla \Psi_m^* \times Id)(G_m \cap D_{\nabla \Psi_m^*}),$$ with $D_{\nabla \Psi_m}$ and $D_{\nabla \Psi_m^*}$ denoting the set of continuity points for $\nabla \Psi_m$ and $\nabla \Psi_m^*$, respectively, where $\Psi_m^*$ is the Legendre transform of $\Psi_m$.    

The free boundary associated to $f_m$ is denoted by $\overline{\partial U_m \cap \Omega}$ and the free boundary associated to $g_m$ by $\overline{\partial V_m \cap \Lambda}$. They correspond to $\overline{\partial F_m \cap \Omega}$ and $\overline{\partial G_m \cap \Lambda}$, respectively \cite[Remark 3.3]{AFi}. One method of obtaining free boundary regularity is to first prove regularity results on $\Psi_m$ and then utilize that $\nabla \Psi_m$ gives the direction of the normal to the free boundary $\overline{\partial U_m \cap \Omega}$ \big(by symmetry and duality, this also implies a similar result for $\overline{\partial V_m \cap \Lambda}$\big). 

Indeed, this method was employed by Caffarelli \& McCann \cite{CM} to deduce $C_{loc}^{1,\alpha}$ free boundary regularity away from a singular set $\tilde S$ in the case when $\Omega$ and $\Lambda$ are strictly convex and separated by a hyperplane. Indrei \cite{I} generalized an improvement of this result in the overlapping case: he obtains $C_{loc}^{1,\alpha}$ free boundary regularity away from the common region $\Omega \cap \Lambda$ and a singular set $S$ which in the disjoint case is a subset of $\tilde S$. Moreover, he developed a method to study the Hausdorff dimension of $\tilde S$ and utilized it to prove that if the domains are $C^{1,1}$ and uniformly convex, then $S$ has Hausdorff dimension $(n-2)$.      

In \S \ref{APP}, we connect the shadow boundaries with this singular set and show that one may replace the uniform convexity assumption with a strict convexity assumption to obtain that the singular set has Hausdorff dimension $(n-2)$, see Theorem \ref{singset}. The precise connection is this: the singular set breaks up into two parts; one of these can be handled using notions from transport theory and non-smooth analysis; the other can be shown to be trapped on the boundary of $P_\Lambda(\Omega)$. Thus, understanding the Hausdorff dimension of the boundary of this shadow is a way to obtain bounds on the Hausdorff dimension of the singular set. This is where the rectifiability result of Theorem \ref{wc} comes into play. Since Theorem \ref{hold} was used in the proof of Theorem \ref{wc}, this highlights the interplay between the shadow generated by parallel illumination, the shadow generated by orthogonal projections, and the Monge-Amp\`ere free boundary problem arising in optimal transport theory.

\section{Regularity of shadows generated by parallel illumination} \label{PaI}
In this section we investigate the regularity of the shadow region of a convex domain $\Lambda \subset \mathbb{R}^n$ under parallel illumination. For $u \in \SS ^{n-1}$, we denote the shadow of $\Lambda$ by the set $S_u$ of points $x \in \partial \Lambda$ such that there exists a normal vector $\nu(x)$ (i.e. a vector in the normal cone of $\Lambda$ at $x$) for which $\langle \nu(x), u \rangle >0$. Our aim is to prove that for a strictly convex domain $\Lambda$, the boundary $\partial S_u$ (in the topology of $\partial \Lambda$) is locally a continuous graph and that this regularity is optimal in the sense that if $\Lambda$ is not strictly convex, then $\partial S_u$ might fail to locally be a graph.

Given $k \in \NN$ and $x=(x_1,x_2, \cdots, x_k) \in \RR ^k,$ we denote an arbitrary vector in $\RR^{k-1}$ by $x':=(x_1, x_2, \cdots, x_{k-1}).$ Furthermore, let $x'':=(x')'=(x_1, x_2, \cdots, x_{k-2}) \in \RR ^{k-2}$. For a set $A \subset \RR ^k$ define $A':=\{x' \ : \ x \in A\}$ and $A'':=(A')'$.

Let $x \in \partial \Lambda$ be a boundary point of the shadow $S_u$. Without loss of generality we may assume that in a neighborhood of $x$, say $U$, $\Lambda$ is parametrized as $x_n \leq \phi (x')$, for some strictly concave function $\phi$. Consequently, $\partial \Lambda$ is locally given by $x_n = \phi (x')$ where the domain of $\phi$ is $U'\subset \RR^{n-1}$. Note that $x' \mapsto (x',\phi(x'))$ is a homeomorphism between the spaces $U'$ and $\partial \Lambda \cap U$.

For every $y' \in U'$ there is a one-to-one correspondence between superdifferentials $w \in \partial^{+} \phi (y')$ and normals $\nu$ at $(y', \phi(y'))$ given by $\nu = \frac{(-w,1)}{(|w|^2+1)^{1/2}}$. Therefore $(y', \phi(y')) \in S_u$ if and only if $\langle w(y'), u' \rangle < u_n$, for some $w \in \partial^{+} \phi (y')$.

In this section we prove that $S'_u \cap U'$ (in the usual $\RR ^{n-1}$ topology) is locally a continuous graph. By rotating the coordinate system, if necessary, we may assume $x=0$ and $u'=(0,0, \cdots, 1) \in \RR ^{n-1}$; moreover, we identify $\RR^{n-2}$ with $(u')^\perp$. Under these assumptions, the condition $\langle w, u' \rangle <u _n$ takes the form $w_{n-1} < u_n$. We begin our analysis with the following lemma.
\begin{lemma}
Let $\Lambda \subset \mathbb{R}^n$ be a strictly convex domain and $y'\in S'_u \cap U'$. Then $(y'',\alpha) \in S'_u$, for every $\alpha > y_{n-1}$ such that $(y'',\alpha) \in U'$.
\end{lemma}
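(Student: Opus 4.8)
We work in the local coordinates fixed above: $x = 0$, $\Lambda$ is given near $0$ by $x_n \le \phi(x')$ with $\phi$ strictly concave on $U' \subset \RR^{n-1}$, and $u' = (0,\dots,0,1) \in \RR^{n-1}$, so that $(y',\phi(y')) \in S_u$ if and only if $w_{n-1} < u_n$ for some superdifferential $w \in \partial^+\phi(y')$. Writing $y' = (y'', y_{n-1})$ with $y'' \in \RR^{n-2}$, we are given $y' \in S'_u \cap U'$ and must show $(y'', \alpha) \in S'_u$ for every $\alpha > y_{n-1}$ with $(y'',\alpha) \in U'$. So the plan is to produce, at the point $(y'',\alpha)$, a superdifferential $w$ of $\phi$ whose $(n-1)$-st coordinate is $< u_n$.

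**Main idea: monotonicity of superdifferentials in the $e_{n-1}$ direction.** The key is that for a concave function, along the line $t \mapsto (y'', t)$ the one-sided derivative in the $+e_{n-1}$ direction is nonincreasing in $t$. Concretely, pick $w \in \partial^+\phi(y'')$ (abusing notation, at the point $(y'', y_{n-1})$) with $w_{n-1} < u_n$, which exists by hypothesis. For any superdifferential $\widetilde w$ at $(y'',\alpha)$ with $\alpha > y_{n-1}$, the superdifferential inequalities
\begin{equation*}
\phi(y'',\alpha) \le \phi(y'', y_{n-1}) + w_{n-1}(\alpha - y_{n-1}), \qquad
\phi(y'', y_{n-1}) \le \phi(y'',\alpha) + \widetilde w_{n-1}(y_{n-1} - \alpha)
\end{equation*}
add up to give $\widetilde w_{n-1}(y_{n-1} - \alpha) \ge -w_{n-1}(y_{n-1}-\alpha)$, i.e. (dividing by the negative number $y_{n-1}-\alpha$) $\widetilde w_{n-1} \le w_{n-1} < u_n$. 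Thus \emph{every} superdifferential at $(y'',\alpha)$ already has its $(n-1)$-st coordinate $\le w_{n-1} < u_n$, and since $\partial^+\phi$ is nonempty at every interior point of $U'$ (concavity), we conclude $(y'',\alpha) \in S'_u$.

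**What needs care.** There is essentially no hard obstacle here; the only points to be careful about are (i) making sure the segment joining $(y'',y_{n-1})$ to $(y'',\alpha)$ lies in $U'$ so that both superdifferentials are defined and the superdifferential inequalities apply — this is built into the hypothesis that $(y'',\alpha) \in U'$, using convexity of $U'$ (or one may simply restrict to the segment, which is where $\phi$ is finite and concave); and (ii) confirming $\partial^+\phi(y'',\alpha) \ne \emptyset$, which holds because $\phi$ is a finite concave function on the open set $U'$. Note that strict convexity of $\Lambda$ is not actually needed for this particular lemma — plain concavity of $\phi$ suffices — but it is harmless to keep it in the statement since it is standing hypothesis for the section. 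I would present the argument in exactly the two-line computation above, prefaced by the remark that it expresses the monotonicity of directional derivatives of a concave function.
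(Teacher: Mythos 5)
Your proof is correct and follows essentially the same route as the paper's: both rest on the monotonicity of the superdifferential of the concave function $\phi$ along the segment from $(y'',y_{n-1})$ to $(y'',\alpha)$, to conclude that the $(n-1)$-st coordinate of any superdifferential at $(y'',\alpha)$ is at most $w_{n-1} < u_n$. Your side observation that plain concavity (weak monotonicity, giving $\widetilde w_{n-1}\le w_{n-1}$) already suffices here, whereas the paper invokes the strict monotonicity inequality, is accurate but does not change the substance of the argument.
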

\begin{proof}
Since $y' \in S'_u$, there exists $w^1 \in \partial ^{+} \phi (y')$ such that $w^1_{n-1} < u_n$. Let $w_2 \in \partial ^{+} \phi (y'',\alpha)$ be any element in the superdifferential. By the monotonicity formula,  $$\langle w^2-w^1, (y'', \alpha)-y' \rangle <0,$$ or equivalently $(w^2_{n-1}-w^1_{n-1})(\alpha-y_{n-1})<0$. Therefore, $w^2_{n-1}<w^1_{n-1}$. Combining this with $w^1_{n-1}<u_n$ yields $w^2_{n-1}<u_n$, and this implies $(y'',\alpha) \in S'_u$.
\end{proof}

\begin{lemma}\label{lemma2}
Let $\Lambda \subset \mathbb{R}^n$ be a strictly convex domain. Then there exists a ball $V'' \subset \RR^{n-2}$ centered at $0''$ with the following properties: for every $y'' \in V''$, there exist $\alpha, \beta \in \RR$ such that $(y'',\alpha),(y'',\beta) \in U'$ and for every $\eta \in \partial ^{+} \phi (y'',\alpha)$ and $\zeta \in \partial ^{+} \phi (y'',\beta)$, one has $\eta _{n-1}<u_n<\zeta _{n-1}$.
\end{lemma}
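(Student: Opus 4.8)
The plan is to distill from the hypothesis a single superdifferential vector $w^\ast$ at $0'$ that sits exactly on the edge of the shadow, namely with $w^\ast_{n-1}=u_n$, and then to transport this edge information up and down the vertical line through $0''$ by means of the (now strict) monotonicity formula from the previous lemma, picking up along the way a quantitative gap that survives a small horizontal perturbation of $y''$.

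\emph{Step 1 (the edge vector).} Passing to a smaller neighborhood, I would assume $U'$ contains an open ball $B(0',\rho)$ on which $\phi$ is $L$-Lipschitz; then for $y'\in B(0',\rho)$ the superdifferential $\partial^+\phi(y')$ is nonempty, convex and compact, sits inside $\overline{B(0,L)}$, and $y'\mapsto\partial^+\phi(y')$ has closed graph, while strict concavity makes the monotonicity inequality $\langle\eta_1-\eta_2,z_1-z_2\rangle<0$ strict whenever $z_1\ne z_2$. Since $0$ is a boundary point of $S_u$ and $y'\mapsto(y',\phi(y'))$ is a homeomorphism carrying $S'_u\cap U'$ onto $S_u\cap U$, the point $0'$ lies on the relative boundary of $S'_u$ in $U'$, so there are $a^k\to0'$ in $S'_u$ and $b^k\to0'$ outside $S'_u$. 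Picking $w^{a^k}\in\partial^+\phi(a^k)$ with $w^{a^k}_{n-1}<u_n$ and any $w^{b^k}\in\partial^+\phi(b^k)$ (forcing $w^{b^k}_{n-1}\ge u_n$), boundedness and the closed graph give, along a subsequence, $w^a,w^b\in\partial^+\phi(0')$ with $w^a_{n-1}\le u_n\le w^b_{n-1}$; convexity of $\partial^+\phi(0')$ then yields $w^\ast\in[w^a,w^b]$ with $w^\ast_{n-1}=u_n$.

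\emph{Step 2 (producing $\alpha$ and $\beta$).} I would fix $0<\gamma_0<\gamma$ with $(0'',\pm\gamma_0),(0'',\pm\gamma)\in B(0',\rho)$ and set $d=(0'',\gamma_0)$. Comparing $d$ with $0'$ via strict monotonicity and the vector $w^\ast$ gives $(\eta_{n-1}-u_n)\gamma_0<0$, hence $\eta_{n-1}<u_n$ for every $\eta\in\partial^+\phi(d)$; compactness of $\partial^+\phi(d)$ upgrades this to $\eta_{n-1}\le u_n-\eps_d$ with $\eps_d:=u_n-\max_{\eta\in\partial^+\phi(d)}\eta_{n-1}>0$. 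Then for small $|y''|$, setting $c=(y'',\gamma)$ and comparing $c$ with $d$, from $\langle w^c-\eta,(y'',\gamma-\gamma_0)\rangle<0$ and $|(w^c)''-\eta''|\le 2L$ one gets, for all $w^c\in\partial^+\phi(c)$ and $\eta\in\partial^+\phi(d)$,
\[
w^c_{n-1}<\eta_{n-1}+\frac{2L\,|y''|}{\gamma-\gamma_0}\le u_n-\eps_d+\frac{2L\,|y''|}{\gamma-\gamma_0},
\]
which is $<u_n$ once $|y''|<(\gamma-\gamma_0)\eps_d/(2L)$; so $\alpha:=\gamma$ works. The mirror argument with $d'=(0'',-\gamma_0)$ and $c=(y'',-\gamma)$ yields $\eps_{d'}:=(\min_{\eta\in\partial^+\phi(d')}\eta_{n-1})-u_n>0$ and $w^c_{n-1}>u_n$ once $|y''|<(\gamma-\gamma_0)\eps_{d'}/(2L)$, so $\beta:=-\gamma$ works. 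Taking $V''$ to be the ball about $0''$ whose radius is the smallest of $(\gamma-\gamma_0)\eps_d/(2L)$, $(\gamma-\gamma_0)\eps_{d'}/(2L)$, and the largest value keeping $(y'',\pm\gamma)$ in $B(0',\rho)$ completes the argument.

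The hard part is Step 1: manufacturing an element of $\partial^+\phi(0')$ with $(n-1)$-st coordinate equal to $u_n$ exactly. This is precisely where the assumption that $0$ is a true boundary point of the shadow is used, in an essential two-sided way, and the extraction rests only on compactness, closed graph and convexity of superdifferentials — no regularity of the Gauss map is available or needed. After that, everything is a quantitative reprise of the monotonicity argument already in the previous lemma, the single extra device being compactness of $\partial^+\phi$ at the auxiliary heights $\pm\gamma_0$, which turns the strict inequalities into the uniform gaps $\eps_d,\eps_{d'}$ that absorb the perturbation in $y''$.
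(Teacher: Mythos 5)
Your proof is correct, and it reaches the same two landmarks as the paper's argument but by somewhat different routes. In Step~1, the paper establishes the existence of $w^\ast\in\partial^+\phi(0')$ with $w^\ast_{n-1}=u_n$ by splitting into three cases according to whether $w_{n-1}<u_n$ for all $w$, $>u_n$ for all $w$, or $=u_n$ for some $w$, and then using upper semicontinuity of the superdifferential (citing Rockafellar, Corollary~24.5.1) to rule out the first two as incompatible with $0'\in\partial S'_u$; you instead extract $w^\ast$ directly by taking limits of superdifferentials along sequences $a^k\to 0'$ inside $S'_u$ and $b^k\to 0'$ outside, via local boundedness plus the closed-graph property, and then interpolate by convexity of $\partial^+\phi(0')$. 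These are essentially the same tool wielded in two presentations. Where you genuinely diverge is Step~2: the paper, having obtained the strict inequalities $\eta_{n-1}<u_n<\zeta_{n-1}$ at $(0'',\alpha)$ and $(0'',\beta)$, simply invokes continuity of the superdifferential a second time to propagate them to a ball $V''$. You instead insert auxiliary heights $\pm\gamma_0$, use compactness to turn the strict inequalities into uniform gaps $\eps_d,\eps_{d'}$, and then run the strict monotonicity formula between $(y'',\pm\gamma)$ and $(0'',\pm\gamma_0)$ together with the Lipschitz bound $|\partial^+\phi|\le L$ to absorb the horizontal perturbation. This avoids a second appeal to the semicontinuity result and yields an explicit radius for $V''$ in terms of $L$, $\gamma-\gamma_0$, $\eps_d$, $\eps_{d'}$, at the cost of a slightly longer computation. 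Both arguments are correct; yours is marginally more self-contained and quantitative, the paper's is shorter.
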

\begin{proof}
Since the set $\partial ^{+} \phi(0')$ is convex, one of the following is true:
  \begin{itemize}
    \item[(i)] $w_{n-1} < u_n$ for every $w \in \partial^{+} \phi (0')$;
    \item[(ii)] $w_{n-1} > u_n$ for every $w \in \partial^{+} \phi (0')$;
    \item[(iii)] $w_{n-1} = u_n$ for some $w \in \partial^{+} \phi (0')$.
  \end{itemize}
However, by continuity properties of the superdifferential of a convex function (see e.g. \cite[Corollary 24.5.1]{Ro}), if $\text{(i)}$ or $\text{(ii)}$ holds, then the strict inequality will be satisfied in some neighborhood of $0'$. This contradicts $0' \in \partial S'_u$. Hence, $w_{n-1} = u_n$ for some $w \in \partial^{+} \phi (0')$. Pick $\beta <0 < \alpha$ such that $(0'',\alpha), (0'',\beta) \in U'$. The monotonicity formula implies that every $\eta \in \partial^{+} \phi(0'',\alpha)$ satisfies $\eta _{n-1} < w_{n-1}=u_n$, and similarly every $\zeta \in \partial ^{+} \phi(0'',\beta)$ satisfies $\zeta _{n-1} > w_{n-1}=u_n$. By utilizing the continuity of the superdifferential again, there exists a ball $V''$ centered at $0''$ such that for every $y'' \in V''$ with $(y'',\alpha),(y'',\beta) \in U'$, we have that for every $\eta \in \partial^{+} \phi(y'',\alpha)$ and $\zeta \in \partial ^{+} \phi(y'',\beta)$, $\eta _{n-1} < u_n<\zeta _{n-1}$.
\end{proof}

Let $V''$ be the ball from Lemma \ref{lemma2}. For every $y'' \in V''$, define
\begin{equation}\label{eq: gamma}
  \gamma(y''):= \inf \{ t \ : \ (y'',t) \in S'_u \cap U' \}.
\end{equation}
By Lemma \ref{lemma2}, $\gamma$ is well-defined with $(y'',\gamma(y'')) \in U'$. 

\begin{lemma}(Properties of $\gamma$) \label{lemma3}
Let $\Lambda \subset \mathbb{R}^n$ be a strictly convex domain and $V''$ the ball from Lemma \ref{lemma2}. For every $y'' \in V''$ there exists $w \in \partial ^{+}\phi(y'',\gamma(y''))$ such that $w_{n-1}=u_n$. Moreover, if $t > \gamma(y'')$ and $(y'',t) \in U'$, then $\zeta_{n-1} < u_n$ for every $\zeta \in \partial ^{+}\phi(y'',t)$. Similarly, if $t < \gamma(y'')$ and $(y'',t) \in U'$, then $\zeta_{n-1} > u_n$ for every $\zeta \in \partial ^{+}\phi(y'',t)$. Furthermore, $\gamma$ is continuous.

\end{lemma}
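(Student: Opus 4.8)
The plan is to extract the three pointwise claims from the monotonicity of the superdifferential and Lemma~\ref{lemma2}, and then obtain continuity of $\gamma$ from these claims together with the upper semicontinuity of the superdifferential map. I would argue as follows.

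\emph{Step 1: the vertical picture at a fixed $y''$.} Fix $y'' \in V''$ and consider the one–dimensional slice $t \mapsto (y'',t)$. For $\eta^1 \in \partial^+\phi(y'',s)$ and $\eta^2 \in \partial^+\phi(y'',t)$ with $s < t$, the monotonicity formula for superdifferentials of a concave function gives $\langle \eta^2 - \eta^1, (y'',t) - (y'',s)\rangle \le 0$, i.e. $(\eta^2_{n-1} - \eta^1_{n-1})(t - s) \le 0$, hence $\eta^2_{n-1} \le \eta^1_{n-1}$. Thus along the slice the quantity $\sup\{\zeta_{n-1} : \zeta \in \partial^+\phi(y'',t)\}$ is non-increasing in $t$ (and likewise the $\inf$). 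By Lemma~\ref{lemma2} there are heights $\alpha > \beta$ with every $\eta \in \partial^+\phi(y'',\alpha)$ satisfying $\eta_{n-1} < u_n$ and every $\zeta \in \partial^+\phi(y'',\beta)$ satisfying $\zeta_{n-1} > u_n$, so the set $\{t : (y'',t) \in S'_u \cap U'\} = \{t : \exists\, w \in \partial^+\phi(y'',t),\ w_{n-1} < u_n\}$ is a nonempty subset of $U'$ that is bounded below (it excludes $\beta$ and everything that forces $w_{n-1} > u_n$), so $\gamma(y'')$ is well-defined and $(y'',\gamma(y'')) \in U'$. For $t > \gamma(y'')$ pick $\gamma(y'') \le s < t$ with $(y'',s) \in S'_u$, so some $w \in \partial^+\phi(y'',s)$ has $w_{n-1} < u_n$; then any $\zeta \in \partial^+\phi(y'',t)$ has $\zeta_{n-1} \le w_{n-1} < u_n$ by Step~1's monotonicity — this gives the second assertion. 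The third (for $t < \gamma(y'')$) is symmetric: if some $\zeta \in \partial^+\phi(y'',t)$ had $\zeta_{n-1} \le u_n$, a small perturbation argument via Lemma~\ref{lemma2}'s reasoning would put points below $\gamma(y'')$ into $S'_u$ unless $\zeta_{n-1} > u_n$ strictly; more directly, if $\zeta_{n-1} < u_n$ then $(y'',t) \in S'_u$ contradicting minimality, and if $\zeta_{n-1} = u_n$ then by upper semicontinuity of the superdifferential we would find nearby $t' > t$ still admitting $w'$ with $w'_{n-1} < u_n$, again contradicting $t < \gamma(y'')$ once we note $t$ itself would then be a limit of shadow points — I will need to phrase this boundary case carefully.

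\emph{Step 2: the superdifferential at the critical height.} At $t = \gamma(y'')$, by definition there is a sequence $t_k \downarrow \gamma(y'')$ with $(y'',t_k) \in S'_u$, hence $w^k \in \partial^+\phi(y'',t_k)$ with $w^k_{n-1} < u_n$. The $w^k$ are bounded (local Lipschitz bound on $\phi$), so along a subsequence $w^k \to w$, and by the closed-graph (upper semicontinuity) property of the superdifferential, \cite[Corollary 24.5.1]{Ro}, $w \in \partial^+\phi(y'',\gamma(y''))$ with $w_{n-1} \le u_n$. On the other hand, applying the second assertion just above to any fixed $t > \gamma(y'')$ and then letting $t \downarrow \gamma(y'')$ cannot by itself force $w_{n-1} \ge u_n$; instead I use Step~1 in the other direction: for $t < \gamma(y'')$ the third assertion gives $\zeta_{n-1} > u_n$, and again taking $t \uparrow \gamma(y'')$ with bounded $\zeta \in \partial^+\phi(y'',t)$ yields a limit $w' \in \partial^+\phi(y'',\gamma(y''))$ with $w'_{n-1} \ge u_n$. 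Since $\partial^+\phi(y'',\gamma(y''))$ is convex, the segment from $w'$ to $w$ lies in it and crosses the hyperplane $\{\,\cdot\,_{n-1} = u_n\}$, producing the desired $w$ with $w_{n-1} = u_n$.

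\emph{Step 3: continuity of $\gamma$.} I would prove $\gamma$ is both upper and lower semicontinuous on $V''$. For upper semicontinuity: fix $y'' \in V''$ and $\eps > 0$; the point $(y'', \gamma(y'')+\eps)$ is interior to $S'_u \cap U'$ in the sense that it admits $w$ with $w_{n-1} < u_n$ strictly, so by \cite[Corollary 24.5.1]{Ro} there is a neighborhood of $(y'',\gamma(y'')+\eps)$ on which the superdifferential still contains vectors with $(\,\cdot\,)_{n-1} < u_n$; hence for $z''$ near $y''$ we have $(z'',\gamma(y'')+\eps) \in S'_u \cap U'$, so $\gamma(z'') \le \gamma(y'')+\eps$. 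For lower semicontinuity: the point $(y'',\gamma(y'')-\eps)$ satisfies, by the third assertion, $\zeta_{n-1} > u_n$ strictly for all $\zeta \in \partial^+\phi(y'',\gamma(y'')-\eps)$; by the same upper-semicontinuity-of-superdifferential statement, this strict inequality persists in a neighborhood (a compactness argument on the slice may be needed since $\partial^+\phi$ is set-valued), so $(z'',\gamma(y'')-\eps) \notin S'_u$ for $z''$ near $y''$, whence $\gamma(z'') \ge \gamma(y'')-\eps$. Combining, $\gamma$ is continuous.

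\emph{Main obstacle.} The delicate point is Step~3's lower semicontinuity, together with the boundary case $w_{n-1} = u_n$ in Step~1: because the superdifferential is set-valued, ``strict inequality at a point'' does not immediately propagate to a neighborhood without a small compactness/uniformity argument, and one must be careful that the persistence of $\zeta_{n-1} > u_n$ is uniform over the (compact, by local boundedness and closedness) set $\partial^+\phi(y'',\gamma(y'')-\eps)$. I expect the cleanest route is to first establish the monotone structure along vertical slices (Step~1), which reduces everything to a monotone one-parameter family, and then feed that into the upper-semicontinuity property of the superdifferential to get the two-sided estimate on $\gamma$.
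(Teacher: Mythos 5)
Your proposal is broadly correct but reverses the paper's logical order, and the one point you flagged as delicate is indeed where your proposed tool is the wrong one.

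\textbf{Difference in route.} The paper proves the first assertion (existence of $w \in \partial^+\phi(y'',\gamma(y''))$ with $w_{n-1}=u_n$) \emph{first}, by ruling out the two strict alternatives ``$w_{n-1}<u_n$ for all $w$'' and ``$w_{n-1}>u_n$ for all $w$'' via the continuity of the superdifferential (each alternative would propagate to a vertical neighborhood and contradict the definition of $\gamma$), and then obtains a convex combination. The second and third assertions then fall out immediately from strict monotonicity applied against this $w$. You instead try to prove the second and third assertions first, then squeeze out $w$ as a double limit from above and below. Both orderings can work, but your route is the longer of the two and introduces the circularity you noticed (Step~2 cites the third assertion, which Step~1 has not yet cleanly established). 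The paper's continuity argument is also different from yours in form (sequential compactness on $\{\gamma(y''_k)\}$ rather than one-sided semicontinuity), but that part of your argument is sound; the Hausdorff upper semicontinuity in \cite[Corollary 24.5.1]{Ro} together with compactness of $\partial^+\phi$ along the slice does give the uniform gap you need.

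\textbf{The gap.} In Step~1, for $t<\gamma(y'')$ and the boundary case $\zeta_{n-1}=u_n$, you propose to use upper semicontinuity of the superdifferential to find nearby $t'>t$ admitting $w'$ with $w'_{n-1}<u_n$. Upper semicontinuity cannot produce a strict sign change: it only says $w'$ stays \emph{close} to $\partial^+\phi(y'',t)$, so $w'_{n-1}$ is close to $u_n$, not necessarily below it. The tool you actually need here is the \emph{strict} monotonicity formula coming from strict convexity of $\Lambda$ (the paper uses $\langle w^2-w^1,(y'',\alpha)-y'\rangle<0$ with a strict inequality throughout). With strict monotonicity the argument closes instantly: if some $\zeta\in\partial^+\phi(y'',t)$ has $\zeta_{n-1}=u_n$, then for every $t'\in(t,\gamma(y''))$ and every $\eta\in\partial^+\phi(y'',t')$ one has $\eta_{n-1}<\zeta_{n-1}=u_n$, so $(y'',t')\in S'_u$ and hence $\gamma(y'')\le t'$; letting $t'\downarrow t$ gives $\gamma(y'')\le t$, a contradiction. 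Once you use strict (rather than non-strict) monotonicity in Step~1, the third assertion is proved independently of the first, the circularity with Step~2 dissolves, and your double-limit construction of $w$ goes through (though the paper's case analysis reaches the same $w$ more directly). Note also that your Step~1 monotonicity is written with $\le$; you should make it $<$ from the outset, since strict concavity is a standing hypothesis and the strict version is needed at several points.
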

\begin{proof}
Suppose $w_{n-1}<u_n$ for all $w \in \partial ^{+}\phi(y'',\gamma(y''))$. By continuity of the superdifferential, $\zeta _{n-1}<u_n$ for all $\zeta \in \partial ^{+}\phi(y'',t)$ if $t$ is sufficiently close to $\gamma(y'')$; therefore, $(y'',t) \in S'_u \cap U'$ for some $t<\gamma(y'')$, and this contradicts the definition of $\gamma$. On the other hand, if $w_{n-1}>u_n$ for all $w \in \partial ^{+}\phi(y'',\gamma(y''))$, then again by continuity of the superdifferential, $\zeta _{n-1}>u_n$ for all $\zeta \in \partial ^{+}\phi(y'',t)$ if $t$ is sufficiently close to $\gamma(y'')$. This implies $(y'',t) \in (S'_u)^c \cap U'$ for $\gamma(y'') < t \leq t_0$, with $t_0$ sufficiently close to $\gamma(y'')$; again, this produces a contradiction. Therefore, there exist $w^0,w^1 \in \partial^{+} \phi(y'',\gamma(y''))$ such that $w^1_{n-1} \leq u_n$ and $w^0_{n-1} \geq u_n$. Hence, for some $s \in [0,1]$, $w_{n-1}=u_n$ where $w=(1-s)w^0+s w^1$. Since $\partial^{+} \phi(y'',\gamma(y''))$ is a convex set, it follows that $w \in \partial^{+} \phi(y'',\gamma(y''))$. Now pick any $t>\gamma(y'')$. By the monotonicity formula, for every $\zeta \in \partial^{+} \phi(y'',t)$, $\zeta _{n-1} < w_{n-1}=u_n$. Similarly, if $t<\gamma(y'')$, then $\eta _{n-1}>w_{n-1}=u_n$ for all $\eta \in \partial^{+} \phi(y'',t)$. Next let $y''_k, y'' \in V''$ and $y''_k \to y''$. Since $\{\gamma(y''_k)\}$ is a bounded sequence, every subsequence has a further subsequence that converges. Take such a subsequence and suppose it converges to, say, $t \in \mathbb{R}$. If $t>\gamma(y'')$, then by what has already been proved, we have that $\zeta _{n-1}<u_n$ for all $\zeta \in \partial^{+} \phi(y'',t)$. Therefore, by the continuity of the superdifferential, this condition is satisfied in some neighborhood of $(y'',t)$, but this contradicts the fact that $(y''_k,\gamma(y''_k)) \to (y'',t)$ (along this subsequence) and that there exists $w^k \in \partial^{+} \phi(y''_k,\gamma(y''_k))$ such that $w^k_{n-1}=u_n$. The case $t<\gamma(y'')$ may be excluded in the same manner. Hence, $t=\gamma(y'')$, and we proved that every subsequence of $\{\gamma(y''_k)\}$ admits a further subsequence converging to $\gamma(y'')$; this implies the continuity of $\gamma$.
\end{proof}

Now we have all the ingredients to prove the following theorem which may be seen as the first step towards investigating the regularity of the boundary of the shadow region.

\begin{theorem}\label{thm: gammacontinuous} Let $\Lambda \subset \mathbb{R}^n$ be a strictly convex domain and $u \in \mathbb{S}^{n-1}$. Then the boundary of the shadow region generated by parallel illumination in the direction $u$ is locally the graph of a continuous function. More precisely,
  \begin{equation}
    \partial S'_u \cap U' \cap \left(V'' \times \RR\right) = \{ (y'', \gamma(y'')) \ : \  y'' \in V''\}.
  \end{equation}
\end{theorem}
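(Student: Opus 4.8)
The plan is to prove the two inclusions in the displayed identity separately, and to observe that both follow almost immediately from the one-sided characterization in Lemma~\ref{lemma3}, once we recall that in the normalized coordinates a point $(z',\phi(z'))$ with $z'\in U'$ lies in $S_u$ precisely when $w_{n-1}<u_n$ for some $w\in\partial^{+}\phi(z')$.

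For the inclusion $\supseteq$, I would fix $y''\in V''$. Lemma~\ref{lemma2} already supplies $(y'',\gamma(y''))\in U'$, so it suffices to show $(y'',\gamma(y''))\in\partial S'_u$, i.e.\ that it lies both in $\overline{S'_u}$ and in $\overline{(S'_u)^{c}}$. Since $U'$ is open, whenever $t>\gamma(y'')$ is close enough to $\gamma(y'')$ we have $(y'',t)\in U'$, and by Lemma~\ref{lemma3} every $\zeta\in\partial^{+}\phi(y'',t)$ satisfies $\zeta_{n-1}<u_n$; hence $(y'',t)\in S'_u$, and letting $t\downarrow\gamma(y'')$ produces a sequence in $S'_u$ converging to $(y'',\gamma(y''))$. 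Symmetrically, for $t<\gamma(y'')$ close to $\gamma(y'')$ every $\zeta\in\partial^{+}\phi(y'',t)$ satisfies $\zeta_{n-1}>u_n$, so no supergradient can witness membership in $S_u$ and therefore $(y'',t)\notin S'_u$; letting $t\uparrow\gamma(y'')$ gives a sequence in $(S'_u)^{c}$ converging to the same point. Thus $(y'',\gamma(y''))\in\partial S'_u\cap U'\cap(V''\times\RR)$.

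For the inclusion $\subseteq$, I would take $(y'',t)$ in the left-hand side and show $t=\gamma(y'')$ by excluding the two strict alternatives. If $t>\gamma(y'')$, then by Lemma~\ref{lemma3} together with compactness of the superdifferential of the concave function $\phi$ at the interior point $(y'',t)$, the quantity $\zeta_{n-1}$ is bounded strictly below $u_n$ over all $\zeta\in\partial^{+}\phi(y'',t)$; by upper semicontinuity of the superdifferential (\cite[Corollary~24.5.1]{Ro}) this bound persists for every supergradient at every point of a neighborhood of $(y'',t)$ contained in $U'$, so that neighborhood is contained in $S'_u$, contradicting $(y'',t)\in\partial S'_u$. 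If instead $t<\gamma(y'')$, the same argument with the reversed inequality produces a neighborhood of $(y'',t)$ no point of which admits a supergradient with $(n-1)$-st coordinate below $u_n$, i.e.\ a neighborhood disjoint from $S'_u$, again contradicting that $(y'',t)$ is a boundary point. Hence $t=\gamma(y'')$, which completes the argument.

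As for where the difficulty sits: this theorem is essentially a repackaging of Lemmas~\ref{lemma2} and~\ref{lemma3}, where the genuine work — well-definedness and, above all, continuity of $\gamma$, extracted from strict convexity via the monotonicity formula and from Rockafellar's continuity of the superdifferential — has already been carried out. The only points needing a little care in the present deduction are keeping all the auxiliary neighborhoods inside the chart domain $U'$ (immediate, since $U'$ is open) and invoking the definition of a topological boundary point as one lying outside both $\inter S'_u$ and $\inter (S'_u)^{c}$; neither is a real obstacle.
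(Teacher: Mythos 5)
Your proof is correct and follows the same route as the paper, which simply cites Lemma~\ref{lemma3} to get that $\gamma$ is continuous, that $(y'',\alpha)\in S'_u$ for $\alpha>\gamma(y'')$, and that $(y'',\alpha)\in(S'_u)^c$ for $\alpha<\gamma(y'')$, from which the set identity is immediate. You have merely unpacked the two inclusions explicitly (using compactness of $\partial^{+}\phi$ and Rockafellar's upper semicontinuity to rule out boundary points off the graph), which is a faithful, if more verbose, rendering of the same argument.
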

\begin{proof} Lemma \ref{lemma3} implies the continuity of $\gamma$ and that $(y'',\alpha) \in S_u'$ for every $\alpha>\gamma(y'')$ and $(y'',\alpha) \in (S_u')^c$ for every $\alpha<\gamma(y'')$, where $(y'',\alpha) \in U' \cap \left(V'' \times \RR\right)$.   
\end{proof}

If the convex domain to be illuminated is uniformly convex, then the shadow boundary is locally H\"older continuous under mild regularity assumptions. The next theorem quantifies this statement.      

\begin{theorem} \label{hold}
If $\Lambda \subset \RR^n$ is a uniformly convex $C^{1,\alpha}$ domain, $\alpha \in (0,1]$, then $\partial S'_u$ is locally a $C^{0,\alpha}$ graph.
\end{theorem}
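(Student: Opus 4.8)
The plan is to upgrade the mere continuity of $\gamma$ established in Lemma~\ref{lemma3} to $\alpha$-H\"older continuity, by replacing the soft monotonicity of the superdifferential used so far with the \emph{quantitative} strong concavity that uniform convexity provides, and pairing it with the $C^{1,\alpha}$ control of the gradient. First I would record two structural reductions, valid after shrinking the neighbourhood $U$ around the shadow-boundary point $x=0$. Since $\partial\Lambda$ is $C^{1,\alpha}$, the concave function $\phi$ representing $\partial\Lambda$ over $U'\subset\RR^{n-1}$ may be taken to lie in $C^{1,\alpha}(\overline{U'})$; in particular $\partial^{+}\phi(y')=\{\nabla\phi(y')\}$ is single-valued, and there is $C>0$ with $\abs{\nabla\phi(x')-\nabla\phi(z')}\le C\abs{x'-z'}^{\alpha}$ on $U'$. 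And uniform convexity of $\Lambda$ means, after possibly shrinking $U'$ again, that $-\phi-\tfrac{c}{2}\abs{\cdot}^{2}$ is convex on $U'$ for some $c>0$, equivalently $\langle\nabla\phi(x')-\nabla\phi(z'),x'-z'\rangle\le -c\abs{x'-z'}^{2}$; restricting the latter to points differing only in the last coordinate, $x'=(y'',t_{2})$ and $z'=(y'',t_{1})$ with $t_{2}>t_{1}$, yields the one-dimensional estimate $\partial_{x_{n-1}}\phi(y'',t_{2})-\partial_{x_{n-1}}\phi(y'',t_{1})\le -c(t_{2}-t_{1})$ (here $\partial_{x_{n-1}}$ denotes differentiation in the $x_{n-1}$ variable).

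Next I would feed in the characterization from Lemma~\ref{lemma3}: for $y''\in V''$ the defining property of $\gamma(y'')$ is that some $w\in\partial^{+}\phi(y'',\gamma(y''))$ has $w_{n-1}=u_{n}$, which in the present differentiable setting simply reads $\partial_{x_{n-1}}\phi(y'',\gamma(y''))=u_{n}$. Given $y'',z''\in V''$, after shrinking $V''$ so that all the points used remain in $U'$, I assume without loss of generality $\tau:=\gamma(z'')\ge s:=\gamma(y'')$; then, using $\partial_{x_{n-1}}\phi(z'',\tau)=u_{n}=\partial_{x_{n-1}}\phi(y'',s)$,
\[
  c(\tau-s)\le\partial_{x_{n-1}}\phi(z'',s)-\partial_{x_{n-1}}\phi(z'',\tau)=\partial_{x_{n-1}}\phi(z'',s)-\partial_{x_{n-1}}\phi(y'',s)\le C\abs{z''-y''}^{\alpha},
\]
where the first inequality is the one-dimensional strong-concavity estimate and the last is the $C^{1,\alpha}$ bound. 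Hence $\abs{\gamma(z'')-\gamma(y'')}\le(C/c)\abs{z''-y''}^{\alpha}$, and combined with Theorem~\ref{thm: gammacontinuous} this exhibits $\partial S'_{u}$ locally as the graph of the $C^{0,\alpha}$ function $\gamma$, which is the assertion.

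I expect the main obstacle to be bookkeeping rather than conceptual: arranging the two reductions of the first paragraph to hold simultaneously on a single fixed neighbourhood, with constants $C$ and $c$ depending only on $\Lambda$. Concretely, one must choose $U$ small enough that $\partial\Lambda\cap U$ is a $C^{1,\alpha}$ graph over the $x'$-hyperplane with a uniform H\"older constant --- for which it suffices that the unit normal to $\partial\Lambda$ stay within a fixed cone about one direction --- and then verify that uniform convexity of the body $\Lambda$ descends to a uniform modulus of strong concavity for this particular $\phi$; once that is in place the estimate above is genuinely one-dimensional and the constants transfer directly. As a side remark, the same relation $\partial_{x_{n-1}}\phi(y'',\gamma(y''))=u_{n}$, together with $\partial^{2}_{x_{n-1}}\phi\le -c<0$, feeds the implicit function theorem when $\partial\Lambda$ is in addition $C^{k+1}$, which is the route to Remark~\ref{sh3}; the point of the present argument is precisely that with only $C^{1,\alpha}$ regularity the implicit function theorem is unavailable and the H\"older bound must be extracted by hand as above.
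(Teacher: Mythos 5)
Your argument is correct and is essentially the paper's proof in different clothing: both hinge on the pointwise identity $\partial_{y_{n-1}}\phi(y'',\gamma(y''))=u_n$ from Lemma~\ref{lemma3}, and combine the $C^{1,\alpha}$ bound $\abs{\nabla\phi(y')-\nabla\phi(z')}\le L\abs{y'-z'}^\alpha$ with the uniform concavity inequality $\langle\nabla\phi(y')-\nabla\phi(z'),y'-z'\rangle\le-\theta\abs{y'-z'}^2$ to get the H\"older estimate with constant $L/\theta$. The paper packages the estimate as placing a cusp $y_{n-1}=(L/\theta)\abs{y''}^\alpha$ above the graph of $\gamma$ at each point (which is then reused as a one-sided barrier in the proof of Theorem~\ref{wc}), whereas you estimate $\abs{\gamma(z'')-\gamma(y'')}$ directly, but the computation is the same.
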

\begin{proof}
From Theorem \ref{thm: gammacontinuous} it follows that $\partial S'_u$ is the graph of a continuous function $\gamma$ defined on the ball $V''$. Therefore, it suffices to show that $\gamma$ is H\"older continuous on $V''$. Lemma \ref{lemma3} implies that for every $y'' \in V'', \ \gamma(y'')$ is the only solution of the equation $\frac{\partial \phi}{\partial y_{n-1}}(y'',\alpha)=u_n$, hence
  \begin{equation} \label{smo}
    \frac{\partial \phi}{\partial y_{n-1}}(y'',\gamma (y''))=u_n,
  \end{equation}
(recall that $\phi$ is the local chart representing $\partial \Lambda$).  
Since $\Lambda$ is $C^{1,\alpha}$ and uniformly convex, $\phi$ is $C^{1,\alpha}$ and uniformly concave, i.e.
  \begin{equation}\label{eq: phiC1,1}
    |\nabla \phi(y')-\nabla \phi(z')| \leq L |y'-z'|^\alpha
  \end{equation}
  and
  \begin{equation}\label{eq: phiunifconcave}
    \langle \nabla \phi(y')-\nabla \phi(z'), y'-z' \rangle \leq -\theta |y'-z'|^2,
  \end{equation}
  for some $L,\theta >0$ and all $y',z' \in V'$. To prove that $\gamma$ is H\"older, it suffices to show that at every point on the graph of $\gamma$, we can place a cusp with uniform opening that stays above the graph. It suffices to prove it for one point since the proof is identical for any other point. Without loss of generality, we assume $0' \in \partial S'_u$ and show that a cusp can be placed at $0'$ that stays above the graph: fix a point $(y'',y_{n-1})$ such that $y_{n-1}>\frac{L}{\theta}|y''|^\alpha$. By \eqref{eq: phiC1,1} we have 
  \begin{equation*}
        \frac{\partial \phi}{\partial y_{n-1}}(y'',y_{n-1}) \leq \frac{\partial \phi}{\partial y_{n-1}}(0,y_{n-1})+L|y''|^\alpha.
  \end{equation*}
 On the other hand, the monotonicity formula \eqref{eq: phiunifconcave} and the assumption $0' \in \partial S'_u$ imply
  \begin{equation*}
        \frac{\partial \phi}{\partial y_{n-1}}(0,y_{n-1}) \leq \frac{\partial \phi}{\partial y_{n-1}}(0,0)-\theta y_{n-1}=u_n-\theta y_{n-1}.
  \end{equation*}
  By combining the previous two inequalities, it follows that
  \begin{equation*}
   \frac{\partial \phi}{\partial y_{n-1}}(y'',y_{n-1}) \leq u_n-\theta y_{n-1}+L|y''|^\alpha<u_n,
  \end{equation*}
which means $(y'',y_{n-1}) \in S'_u$ or equivalently $y_{n-1} > \gamma (y'')$. Thus, the epigraph of the cusp $y_{n-1} = \frac{L}{\theta} |y''|^\alpha$, i.e. $$\Big\{ (y'',y_{n-1}) \ : \ y_{n-1} > \frac{L}{\theta} |y''|^\alpha\Big\},$$ touches the graph of $\gamma$ from above.
\end{proof}
\begin{remark}
Note that the opening of the cusp in the proof of Theorem \ref{smo} is determined by $\frac{L}{\theta}$.
\end{remark}

\begin{remark} \label{sh3} If $\Lambda$ is a uniformly convex domain with a $C^{k+1},\ k \geq 1$, smooth boundary then it is not difficult to see that $\partial S'_u$ is a $C^k$ graph. Indeed, it suffices to show that $\gamma$ is a $C^k$ function. Since $\Lambda$ is uniformly convex, $\phi$ is uniformly concave or $D^2 \phi \leq -\theta$, for some $\theta>0$. In particular, $\frac{\partial ^2 \phi}{\partial y_{n-1}^2} \leq -\theta <0$. Since $\gamma (y'')$ is the only solution of the equation $\frac{\partial \phi}{\partial y_{n-1}}(y'',y_{n-1})=0$, by the implicit function theorem, $\gamma$ is as regular as $\frac{\partial \phi}{\partial y_{n-1}}$, i.e. $C^k$ (note that in the smooth case, we may assume without loss of generality that $u_n=0$ in (\ref{smo})).
\end{remark}

\subsection{Theorem \ref{thm: gammacontinuous} is sharp} \label{sh2} In Theorem \ref{thm: gammacontinuous}, it was shown that for a strictly convex set, the boundary of the shadow is locally a continuous graph in any given direction. It is natural to wonder if this result extends to merely convex sets. Indeed, the following counterexample shows that this is not so: in $\RR^3$, consider the circle $$\{(x,y,z): (x-1)^2+z^2 =1\}$$ and construct a cone-like set by connecting this circle to the point $(0,1,0)$ with line segments. It is not difficult to see that this process generates a convex body so that when it  is illuminated in the direction $(0,1,0)$, the boundary of the resulting shadow is $$\{(x,y,z): (x-1)^2+z^2 =1\} \cup \{(0,t,0): 0\le t \le 1\}.$$ In particular, the boundary of the shadow is not a graph near the origin in any coordinate system.

\subsection{Theorem \ref{hold} is sharp} \label{sh1}
Here, we show that Theorem \ref{hold} is optimal in the following sense: given a direction $u \in \mathbb{S}^{n-1}$, there exists a smooth (i.e. $C^\infty$) convex body $\Lambda \subset \mathbb{R}^n$ for which the boundary of the shadow is not locally H\"older continuous; therefore, the uniform convexity assumption is necessary for the conclusion of the theorem. Indeed, the key observation in the construction of the counterexample is that for a smooth, strictly convex set, $\gamma (y'')$ is the unique solution of the equation $\frac{\partial \phi}{\partial y_{n-1}}(y'',y_{n-1})=0$ (see (\ref{smo})), and since we are working locally, it suffices to find a smooth, strictly convex function $\phi : \RR^2 \rightarrow \RR$ whose level set $$\{(x,y): \partial_y \phi(x,y)=0\}$$ is far from smooth. In fact, an example like this already appeared in work of Kiselman \cite{Ki} in which the regularity of the projection of a three dimensional convex set onto a $2$-dimensional plane is analyzed (see \cite[Example 3.2]{Ki}): let $q$ be an odd natural 
number and set $$\phi(x,y)=x^2(4-y+\frac{1}{2}y^2)+\frac{1}{q+1}y^{q+1}-\frac{1}{q+2}y^{q+2};$$ note that $\phi$ is convex in the strip $|y|<\frac{1}{2}$ and $$\partial_y \phi(x,y)=(y^q-x^2)(1-y).$$ Thus, one may construct a smooth convex set in $\Lambda \subset \RR^3$ whose boundary is locally given by $\phi$ in a neighborhood of the origin. In particular, at the local level $$\{(x,y): \partial_y \phi(x,y)=0\}$$ is represented by $$\Big \{(x,y): y=|x|^{\frac{2}{q}}\Big \},$$  and by selecting $u=(0,1,0)$ it becomes evident that illuminating $\Lambda$ in the direction $u$ generates a shadow boundary which is of class $C^{\frac{2}{q}}$. Since $q$ can be taken arbitrarily large, this family of examples shows that for each $\alpha \in (0,1]$, there exists a smooth convex set $\Lambda_\alpha$ so that the boundary of the shadow is not $C^{0, \alpha}$. Note that this level set method also suggests a way of constructing shadows on the surface of convex bodies with a specified degree of regularity.

\section{Regularity of shadows generated by convex projections} \label{convex shadow}
Let $\Omega \subset \mathbb{R}^n$, $\Lambda \subset \mathbb{R}^n$ be two convex domains and suppose that we wish to orthogonally project  $\Omega$ onto $\Lambda$. This operation generates a shadow region $P_\Lambda(\Omega) \cap \partial \Lambda$ on the boundary of $\Lambda$. The purpose of this section is to study the regularity of this shadow. In other words, given $z_0 \in \partial(P_\Lambda(\Omega)\cap \partial \Lambda)$, we wish to understand how smooth $z_0 \in \partial(P_\Lambda(\Omega)\cap \partial \Lambda)$ is in a neighborhood of $z_0$. 

\subsection{Weak case}
\begin{theorem} \label{wc}
Let $\Omega \subset \RR^n$ be a bounded strictly convex domain and $\Lambda \subset \RR^n$ a convex domain whose boundary is $C^{1,1}$. If $\overline{\Omega}\cap \overline{\Lambda}=\emptyset$,  then $\partial P_\Lambda(\Omega)$ is finitely $(n-2)$-rectifiable.  
\end{theorem}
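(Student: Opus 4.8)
The plan is to reduce the statement to a local ``cone condition'' for $\partial P_\Lambda(\Omega)$ — producing, near each of its points, a Lipschitz graph — and then to verify that condition by a barrier argument built, as indicated in the introduction, from the illumination shadow of $\Lambda$ together with the strict convexity of $\Omega$. Since $\overline\Omega\cap\overline\Lambda=\emptyset$, for each $y\in\overline\Omega$ the metric projection $P_\Lambda(y)$ lies on $\partial\Lambda$, and $P_\Lambda(\overline\Omega)=\overline{P_\Lambda(\Omega)}$ is a compact subset of the $C^{1,1}$ hypersurface $\partial\Lambda$; in particular $\partial P_\Lambda(\Omega)$, the boundary taken relative to $\partial\Lambda$, is compact. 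I would first reduce to the following: every $z_0\in\partial P_\Lambda(\Omega)$ has a relative neighbourhood in $\partial\Lambda$ in which $\partial P_\Lambda(\Omega)$ is (the image, under a $C^{1,1}$ chart, of) a Lipschitz graph over an $(n-2)$-plane. Covering the compact set $\partial P_\Lambda(\Omega)$ by finitely many such neighbourhoods, and using that a Lipschitz graph over a bounded subset of $\RR^{n-2}$ has finite $\mathcal H^{n-2}$-measure and is $(n-2)$-rectifiable, one obtains the conclusion; in fact the argument produces the graphs with Lipschitz constant and radius depending only on $\Omega$ and $\Lambda$, which is the form used downstream. This is the ``standard covering argument.''

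Fix $z_0\in\partial P_\Lambda(\Omega)$, let $N_0=N_\Lambda(z_0)$ be its (unique, since $\partial\Lambda\in C^{1,1}$) outer unit normal, and let $y_0=z_0+t_0N_0$ be the first point at which the outward normal ray $z_0+\RR_{>0}N_0$ meets $\overline\Omega$; here $\dist(\overline\Omega,\overline\Lambda)\le t_0\le\diam(\overline\Omega)+\dist(z,\overline\Omega)$ for a fixed $z\in\overline\Lambda$, so $t_0$ lies in a compact interval of positive reals depending only on the data. The structural point — the analogue of Lemmas~\ref{lemma2}–\ref{lemma3} in the present setting — is that this ray cannot meet the \emph{open} set $\Omega$: otherwise, by continuity of $z\mapsto(z,N_\Lambda(z))$ on $\partial\Lambda$, a whole relative neighbourhood of $z_0$ in $\partial\Lambda$ would have its normal ray meeting $\Omega$, placing $z_0$ in the relative interior of $P_\Lambda(\Omega)$. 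Consequently the ray meets $\overline\Omega$ only at $y_0$ (a second intersection point would force a segment into $\partial\Omega$, contradicting strict convexity of $\Omega$), and $y_0\pm\eps N_0\notin\Omega$ for small $\eps>0$; hence the line $\RR N_0$ avoids the interior of the tangent cone of $\overline\Omega$ at $y_0$, so there is a unit $\nu\in N(\Omega,y_0)$ with $\langle N_0,\nu\rangle=0$. Fixing such a $\nu$, we get a supporting hyperplane $H$ of $\Omega$ at $y_0$ with outer normal $\nu$, $N_0\in H-y_0$, and $\overline\Omega\subseteq H^-:=\{x:\langle x-y_0,\nu\rangle\le0\}$. (Illuminating $\Lambda$ in the direction $\nu$ is what will supply the barrier, just as in the introduction.)

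Next I would pass to the bi-Lipschitz chart of $\partial\Lambda$ over $\mathbb T_{z_0}\Lambda=z_0+N_0^\perp$, writing $\partial\Lambda$ locally as $z(x')=z_0+x'-\psi(x')N_0$ with $\psi$ convex, $C^{1,1}$, $\psi(0)=0$, $\nabla\psi(0)=0$, and splitting $x'=(x'',s)\in\RR^{n-2}\times\RR$ with the $s$-axis along $\nu$ inside $N_0^\perp$ and $x''$ spanning $N_0^\perp\cap\nu^\perp$; then $\partial P_\Lambda(\Omega)$ appears near $z_0$ as $z(\partial A)$, where $A=\{x':z(x')\in P_\Lambda(\Omega)\}$. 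Using $\langle N_0,\nu\rangle=0$ one computes that the normal ray from $z(x')$ has signed height $\langle\,\cdot\,-y_0,\nu\rangle=s+t\,\sigma(x')$ over $H$ at parameter $t$, where $\sigma(x')=\langle N_\Lambda(z(x')),\nu\rangle$ has the sign of $\partial_\nu\psi(x')$, is non-decreasing in $s$ (monotonicity of $\nabla\psi$), and satisfies $|\sigma(x')|\le\|\partial\Lambda\|_{C^{1,1}}|x'|$; here $\{\sigma>0\}$ is exactly the chart of the shadow $S_\nu$ of $\Lambda$. Since any ray meeting $\overline\Omega$ must reach $H^-$ at some parameter $t\le\diam(\overline\Omega)+\max t_0=:R$, membership $z(x')\in P_\Lambda(\Omega)$ forces $s\le R\,|\sigma(x')|$; combining this with strict convexity of $\overline\Omega$ near the single contact point $y_0$ (which controls how the perturbed rays that have entered $H^-$ actually meet or miss $\overline\Omega$) one upgrades the bound to a one-sided \emph{Lipschitz} barrier: near $0$, $A$ lies below the graph of a Lipschitz (indeed conical) function of $x''$ with slope depending only on $\|\partial\Lambda\|_{C^{1,1}}$, $\dist(\overline\Omega,\overline\Lambda)$ and $\diam(\overline\Omega)$, and the symmetric argument bounds $A$ from the other side. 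Hence $\partial A$ sits in a fixed double cone about $\nu^\perp$ on a fixed ball, i.e. it is a Lipschitz graph over $x''$ there; transporting these cones to neighbouring points through the (uniformly bi-Lipschitz) transition maps of the $C^{1,1}$ charts of $\partial\Lambda$ yields the local condition of the first step, and the covering argument finishes. This last move is the analogue of placing the cusp $\{y_{n-1}=\tfrac L\theta|y''|^\alpha\}$ above the graph in the proof of Theorem~\ref{hold}, now with $\alpha=1$ and with the modulus of convexity $\theta$ replaced by the scale coming from disjointness.

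The main obstacle is precisely this upgrade: extracting a genuinely \emph{Lipschitz} one-sided barrier — equivalently, a cone of opening bounded below only in terms of $\Omega$ and $\Lambda$ — rather than the merely continuous (or Hölder) control that strict convexity of $\Omega$ and $C^{1,1}$-regularity of $\Lambda$ give in isolation. Neither domain carries a modulus of convexity, so the naive estimate $s\le R\|\partial\Lambda\|_{C^{1,1}}|x'|$ is by itself inconclusive when $R\|\partial\Lambda\|_{C^{1,1}}\ge1$; the crux is that disjointness of the closures, through the two finite scales $\dist(\overline\Omega,\overline\Lambda)>0$ and $\diam(\overline\Omega)<\infty$, bounds the effective ``lever arm'' $t_0$ and thereby converts the $C^{1,1}$-control on the Gauss map of $\partial\Lambda$, together with the strict convexity of $\Omega$ near $y_0$, into the required Lipschitz control on $\partial P_\Lambda(\Omega)$. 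Verifying that the strict convexity of $\Omega$ is used only qualitatively — to get tangency and a one-point contact, as strict convexity of $\Lambda$ was used in \S\ref{PaI} — so that no quantitative convexity hypothesis on $\Omega$ is smuggled in, is the delicate part.
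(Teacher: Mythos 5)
Your setup is essentially the paper's: the $C^{1,1}$ chart at $z_0$, the normal ray hitting $\overline\Omega$ tangentially at $y_0$, the orthogonal outer normal $\nu$ of $\Omega$ at $y_0$ lying in $\mathbb T_{z_0}\Lambda$, and the raw estimate ``$s \le R|\sigma(x')|$'' all appear in the paper's proof (there written as $z_{n-1} - t(z)\,\partial_{z_{n-1}}\phi(z') < 0$ with $0 < t(z) \le t_y^*$, so that $\partial_{z_{n-1}}\phi(z') > z_{n-1}/t_y^*$). You also put your finger on the right difficulty. But at exactly that point your argument stops: you assert that one ``upgrades the bound to a one-sided Lipschitz barrier'' by ``combining with strict convexity of $\overline\Omega$ near the single contact point $y_0$,'' without ever constructing the barrier or explaining how strict convexity could produce it. That is a genuine gap, and the hint points the wrong way: strict convexity of $\Omega$ carries no modulus and cannot furnish a quantitative cone opening. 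In the paper it is used once, qualitatively, to ensure the supporting hyperplane of $\Omega$ at $y_0$ with normal $\nu = e_{n-1}$ touches $\overline\Omega$ only at $y_0$, which gives the strict sign $z_{n-1} - t(z)\,\partial_{z_{n-1}}\phi(z') < 0$.

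The missing device is an auxiliary uniformly concave function. The paper sets $t_y^* := \dist(\Omega,\Lambda) + 2\max\{\diam\Omega,\ \diam B_{r_y}\}$ and $\Psi(z') := \phi(z') - \tfrac{1}{2t_y^*}\,|z'|^2$; this $\Psi$ is $C^{1,1}$ and uniformly concave with modulus $1/t_y^*$, which is where disjointness and boundedness enter. The raw inequality $\partial_{z_{n-1}}\phi(z') > z_{n-1}/t_y^*$ then reads $\partial_{z_{n-1}}\Psi(z'',z_{n-1}) > 0$, and monotonicity of $\nabla\Psi$ (concavity of $\Psi$) forces $z_{n-1} \le \tilde\gamma(z'')$, where $\tilde\gamma$ is the level set $\{\partial_{z_{n-1}}\Psi = 0\}$ — a Lipschitz graph by the cusp argument of Theorem~\ref{hold} with $\alpha = 1$ applied to $\Psi$, not to $\phi$. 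So the Lipschitz barrier is the illumination shadow boundary of the \emph{artificially} uniformly convex surface $\{x_n = \Psi(x')\}$; this is the precise content of your remark that ``$\theta$ is replaced by the scale coming from disjointness,'' but without writing down $\Psi$ the step does not close. A secondary issue: the paper obtains only a one-sided cone at each point of $\partial P_\Lambda(\Omega)$ (opening toward $e_{n-1} = \nu$, a direction that varies from point to point), and rectifiability then comes from the covering argument over finitely many cone directions as in \cite[Proposition 4.1]{I}. Your claim that ``the symmetric argument bounds $A$ from the other side,'' yielding a double cone and a local Lipschitz graph, is unjustified — $-\nu$ need not be a normal of $\Omega$ at $y_0$, and no two-sided barrier is available or required.
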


\begin{proof}
Consider an arbitrary $y \in \partial P_\Lambda(\Omega)$, and let $\phi: \mathbb{T}_y \Lambda \rightarrow \mathbb{R}$ be a local $C^{1,1}$ concave chart representing $\partial \Lambda$ in a neighborhood $B_{r_y}$ around $y$ so that $\nabla \phi(y)=0$; by translating the coordinate system, if necessary, we may also assume $y=0$. Note that the half-line $L$ at the origin in the direction of the normal of $\Lambda$ at $0$ touches $\Omega$ tangentially at some point, say, $x$ (since the projection occurs along the normal to $\Lambda$ and $y \in \partial P_\Lambda(\Omega)$). By convexity of $\Omega$, $L$ lives on a tangent space of $\Omega$ at $x$ with normal, say $\nu$. Since $\langle \nu, N_\Lambda(0)\rangle =0$, it follows that $\nu$ lives on the tangent space of $\Lambda$ at $0$. Let $e_{n-1}:= \nu$ and $\{e_1,\ldots,e_{n-1}\}$ be a basis for $\mathbb{R}^{n-1}$; set $$t_y^*:= dist(\Omega, \Lambda)+2 \max \{diam(\Omega),diam(B_{r_y})\},$$ and $$\Psi(z'):= \Psi(z'',z_{n-1}) = \phi(z')-\frac{1}{2t_y^*}|z|^2.$$ Note that $\Psi$ is $C^{1,1}$ and uniformly concave, so by Theorem \ref{hold}, it follows that locally around the origin, the level set
\begin{equation*} 
\{(z'',z_{n-1}): 0 = \partial_{z_{n-1}} \Psi(z'',z_{n-1})\},
\end{equation*}
is a Lipschitz graph which will be denoted by $\tilde \gamma(z'')=z_{n-1}(z'')$ (see (\ref{smo})). Now let $\gamma(z''):=\max\{\tilde \gamma(z''), 0\}$, and note that $\gamma$ is Lipschitz. We claim that locally around the orgin, 
\begin{equation} \label{pro} 
\Phi^{-1}(P_\Lambda(\Omega)) \subset \{(z'',z_{n-1}):  z_{n-1} \leq \gamma(z'')\},
\end{equation}    
where $\Phi(w):=(w',\phi(w'))$. Indeed, let $$z:=(z'',z_{n-1}) \in \Phi^{-1}(P_\Lambda(\Omega))\setminus \{0\};$$ if $z_{n-1}\le 0$, then since $\gamma \ge 0$, the result follows. So without loss of generality assume $z_{n-1}>0$. Since $z \in \Phi^{-1}(P_\Lambda(\Omega))$, it follows that $\Phi(z)+t(z)N_\Lambda(z) \in \partial \Omega$ where $t(z)>0$ is the first hitting time (the positivity follows from the disjointness assumption). Next, consider $$P_{\mathbb{R}^{n-1}}(\Phi(z)+t(z)N_\Lambda(z)) \subset \mathbb{R}^{n-1}$$ and note that the $e_{n-1}$ component of this point is negative (since $\Omega$ is strictly convex and $e_{n-1}$ is one of its outer normal vectors). In other words, $z_{n-1} - t(z) \partial_{z_{n-1}} \phi(z') < 0$. Thus, $\partial_{z_{n-1}} \phi(z') > 0$ (recall $z_{n-1}>0$) and since $t(z) \leq t_y^*$, it follows that 
\begin{equation} \label{rost}
\partial_{z_{n-1}} \Psi(z'',\tilde \gamma(z''))=0< \partial_{z_{n-1}} \phi(z')-\frac{1}{t_y^*}z_{n-1}=\partial_{z_{n-1}} \Psi(z'',z_{n-1});
\end{equation}
now assume by contradiction that $z_{n-1} > \gamma(z'')$. In particular, $z_{n-1} > \tilde \gamma(z'')$ so by monotonicity, 
$$\langle \nabla \Psi(z'',z_{n-1})- \nabla \Psi(z'',\tilde \gamma(z'')), (0,z_{n-1} - \tilde \gamma(z''))\rangle \leq 0.$$ Thus, $$\partial_{z_{n-1}} \Psi(z'',z_{n-1}) \le \partial_{z_{n-1}} \Psi(z'',\tilde \gamma(z''))=0,$$ and this contradicts (\ref{rost}) and proves the claim (i.e. (\ref{pro})). Next, note that $$0 \in \Phi^{-1}(P_\Lambda(\Omega)) \cap \{(z'',z_{n-1}):  z_{n-1} \leq \gamma(z'')\},$$ and since $\gamma$ is Lipschitz, (\ref{pro}) implies that we can place a cone oriented in the direction $e_{n-1}$ so that it lies in $$\RR^{n-1} \setminus \Phi^{-1}(P_\Lambda(\Omega)).$$ The opening of the cone depends on the Lipschitz constant of $\nabla \Psi$ and the uniform convexity constant of $-\Psi$; in particular, it depends on $t_y^*$. However, since $P_\Lambda(\Omega)$ is bounded (recall that $\Omega$ is bounded) and the domains have disjoint closures, it follows that $t_y^*$ has a uniform positive lower bound. The existence of this cone implies the claim within the proof of \cite[Proposition 4.1]{I}. Indeed, this is the only part where Indrei used the uniform convexity of $\Lambda$, which we were able to replace with strict convexity of $\Omega$ in our proof above; thus, the rest of the proof follows exactly as \cite[Proposition 4.1]{I} (the idea is that once we have a cone at a point, we can use the $C^{1,1}$ regularity to transition between charts to get a cone at every point of $\partial P_\Lambda(\Omega)$; nevertheless, the cones may be oriented in different directions, but this readily implies rectifiability via a covering argument). 
\end{proof}

\begin{remark}
If in Theorem \ref{wc} $\Lambda$ is bounded, then one may replace $t_y^*$ with $$t^*:= dist(\Omega, \Lambda)+2 \max \{diam(\Omega),diam(\Lambda)\}.$$ 
\end{remark}

\begin{remark} \label{cant}
The disjointness assumption in Theorem \ref{wc} is necessary: indeed, consider a Cantor set $\mathcal{C}$ on $[1,2]$ and let $g$ be a smooth function whose zero level set is $\mathcal{C}$. For $\epsilon>0$ small, it follows that $f(x):=x^2+ \epsilon g(x)$ is convex, so its epigraph is a convex set in $\mathbb{R}^2$. Moreover, consider the epigraph of the function $h(x):=x^2$; of course, it is likewise convex. Now it is not difficult to see that using these epigraphs, one may obtain two bounded convex sets, say $\Omega$ and $\Lambda$, with the property that their boundaries intersect on the image of $\mathcal{C}$ under $h$. In this case, $\partial(P_\Lambda(\Omega)\cap \partial \Lambda)$ does not have finite $\mathcal{H}^{0}$ measure. Nevertheless, in the general case one may still prove a local version of Theorem \ref{wc} away from $\partial(\partial(\Omega \cap \Lambda) \cap \partial \Lambda)$.    
\end{remark}

\subsection{Smooth case}
In Theorem \ref{wc}, we utilized a geometric method of investigating the regularity of shadow boundaries generated by orthogonal projections. In what follows, we develop a more functional approach to attack this problem. The idea is to represent the unknown boundary as the level set of a function defined in terms of local charts. However, since the differential of this function contains the information regarding the regularity of the level set, we need to ensure that this function is smooth enough; this leads us to impose higher regularity on the domains.

\begin{theorem} \label{sc} Suppose $\Omega \subset \RR^n$ and $\Lambda \subset \RR^n$ are $C^{k+1}$, $k\ge1$,  convex domains separated by a hyperplane with $\Omega$ bounded and uniformly convex. If $\overline{\Omega}\cap \overline{\Lambda}=\emptyset$, then $\partial P_\Lambda(\Omega)$ is locally a $C^k$ smooth $(n-2)$-hypersurface. 
\end{theorem}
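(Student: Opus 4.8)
The plan is to realize $\partial P_\Lambda(\Omega)$, near any of its points, as the $y$-component of a regular level set of a single $C^k$ map tying together local defining functions of $\partial\Omega$ and $\partial\Lambda$, and then apply the implicit function theorem. Fix $y_0\in\partial P_\Lambda(\Omega)$. The hypotheses ($\overline{\Omega}\cap\overline{\Lambda}=\emptyset$, $\Omega$ strictly convex, separated by a hyperplane) guarantee that the outward normal ray to $\Lambda$ at $y_0$ meets $\overline{\Omega}$ tangentially at a unique point $x_0\in\partial\Omega$, with positive hitting time $t_0$, and $\langle N_\Lambda(y_0),N_\Omega(x_0)\rangle=0$. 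I would choose $C^{k+1}$ defining functions near $y_0$ and $x_0$ respectively (e.g. suitably modified signed distance functions): $F$ with $\Lambda=\{F\le 0\}$ locally, $\nabla F\ne 0$ on $\partial\Lambda$ and $D^2F\ge 0$ (convexity of $\Lambda$); and $G$ with $\Omega=\{G\le 0\}$ locally, $\nabla G\ne 0$ and $D^2G>0$ on $\partial\Omega$ (uniform convexity of $\Omega$). Since $F,G\in C^{k+1}$, their gradients are $C^k$, and I set
\[
\phi(y,x,t):=\bigl(F(y),\ G(x),\ x-y-t\,\nabla F(y),\ \langle\nabla F(y),\nabla G(x)\rangle\bigr)\in\RR^{n+3}
\]
on a neighborhood of $(y_0,x_0,t_0)$ in $\RR^n\times\RR^n\times\RR$, so $\phi$ is $C^k$. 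The four blocks say that $y\in\partial\Lambda$, that $x\in\partial\Omega$, that $x$ lies on the outward normal ray from $y$, and that this ray is tangent to $\partial\Omega$ at $x$. A short geometric argument (a line tangent to the strictly convex body $\overline{\Omega}$ only grazes it, so normal rays to $\partial\Lambda$ issuing from points just to one side of $y$ miss $\overline{\Omega}$ while those on the other side project onto $\Omega$) shows that near $(y_0,x_0,t_0)$ the zero set $Z:=\phi^{-1}(0)$ satisfies $\pi(Z)=\partial P_\Lambda(\Omega)$, where $\pi(y,x,t):=y$, and that $\pi|_Z$ is injective there (strict convexity of $\Omega$ makes $x$ unique given $y$, and then $\nabla F(y)\ne 0$ makes $t$ unique).

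The core step is to prove that $D\phi$ has full rank $n+3$ at every point of $Z$. Given a target $(\alpha,\beta,v,\delta)\in\RR\times\RR\times\RR^n\times\RR$, I would use the third ($n$-dimensional) block to solve $\dot x=(I+tD^2F(y))\dot y+\dot t\,\nabla F(y)+v$, substitute into the remaining three blocks, and use the relation $\langle\nabla F(y),\nabla G(x)\rangle=0$ valid on $Z$: the second block then loses its $\dot t$-term and becomes $\langle(I+tD^2F(y))\nabla G(x),\dot y\rangle=\beta-\langle\nabla G(x),v\rangle$, while the fourth block acquires $\dot t$-coefficient $\langle D^2G(x)\nabla F(y),\nabla F(y)\rangle$, strictly positive by uniform convexity of $\Omega$, so $\dot t$ is determined once $\dot y$ is. What remains, in $\RR^n$ with $n\ge 2$, is the pair $\langle\nabla F(y),\dot y\rangle=\alpha$ and $\langle(I+tD^2F(y))\nabla G(x),\dot y\rangle=\beta-\langle\nabla G(x),v\rangle$, solvable for all right-hand sides exactly when $\nabla F(y)$ and $(I+tD^2F(y))\nabla G(x)$ are linearly independent. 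But if $(I+tD^2F(y))\nabla G(x)=s\,\nabla F(y)$, pairing with $\nabla G(x)$ gives $|\nabla G(x)|^2+t\,\langle D^2F(y)\nabla G(x),\nabla G(x)\rangle=s\,\langle\nabla F(y),\nabla G(x)\rangle=0$, impossible since $t>0$ and $D^2F(y)\ge 0$ force the left side to be positive. Hence $D\phi$ is surjective on $Z$, so by the regular value theorem $Z$ is locally a $C^k$ submanifold of dimension $(2n+1)-(n+3)=n-2$.

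It then remains to transfer this regularity to the shadow. I would check that $\pi|_Z$ is a $C^k$ immersion into $\partial\Lambda$: if $(\dot y,\dot x,\dot t)\in\ker D\phi$ with $\dot y=0$, the third block gives $\dot x=\dot t\,\nabla F(y_0)$, the fourth gives $\dot t\,\langle\nabla F(y_0),D^2G(x_0)\nabla F(y_0)\rangle=0$, hence $\dot t=0$ and $\dot x=0$; and the first block gives $\langle\nabla F(y_0),\dot y\rangle=0$, so $d\pi$ sends $T_{(y_0,x_0,t_0)}Z$ injectively into $T_{y_0}\partial\Lambda$. Since $\pi|_Z$ is thus a $C^k$ immersion (and, after shrinking, injective), $\partial P_\Lambda(\Omega)=\pi(Z)$ is, near $y_0$, a $C^k$ submanifold of $\partial\Lambda$ of dimension $n-2$; as $y_0$ was arbitrary, this is the assertion of the theorem.

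I expect the main obstacle to be the rank computation of the second step: one must arrange the block differential of $\phi$ so that the two indispensable facts — strict positivity of $\langle D^2G(x)\nabla F(y),\nabla F(y)\rangle$ (uniform convexity of $\Omega$) and linear independence of $\nabla F(y)$ and $(I+tD^2F(y))\nabla G(x)$ ($t>0$ together with convexity of $\Lambda$) — land exactly where they are needed, and recognize that the very constraint $\langle\nabla F,\nabla G\rangle=0$ defining part of $Z$ is what makes the $\dot t$ variable decouple. A more routine but still non-trivial matter is constructing the convex $C^{k+1}$ defining functions $F,G$ with the stated non-degeneracy and verifying carefully the local identity $\pi(Z)=\partial P_\Lambda(\Omega)$.
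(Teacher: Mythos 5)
Your proposal is correct and is essentially the same proof as in the paper: you construct the identical auxiliary map $\phi(x,y,t)=\bigl(G(x),F(y),\langle\nabla F(y),\nabla G(x)\rangle,\,y+t\nabla F(y)-x\bigr)$ (up to a cosmetic reordering/sign of the blocks), prove $D\phi$ has full rank $n+3$ on $\phi^{-1}(0)$ using exactly the same two facts — $\nabla F^{T}D^{2}G\,\nabla F>0$ from uniform convexity of $\Omega$ together with $\nabla F\perp\nabla G$, and $|\nabla G|^{2}+t\,\nabla G^{T}D^{2}F\,\nabla G>0$ from $t>0$ and convexity of $\Lambda$ — and conclude by the implicit/regular-value theorem; your rank verification is just the direct-surjectivity rewording of the paper's $\ker(D\phi)^{T}=\{0\}$ computation. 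The one genuine (and welcome) addition is your explicit check that the projection $(y,x,t)\mapsto y$ restricted to $\phi^{-1}(0)$ is an injective immersion, which justifies passing from the $(n-2)$-submanifold $\phi^{-1}(0)\subset\RR^{2n+1}$ to the claim about $\partial P_\Lambda(\Omega)\subset\partial\Lambda$; the paper takes this final step for granted.
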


\begin{proof}
Given a point $y \in \partial \Lambda$ let $f:\mathbb{R}^{n-1} \rightarrow \mathbb{R}$ be the $C^{k+1}$ concave function which represents $\partial \Lambda$ locally around $y$. Likewise, for $x \in \partial \Omega$ let $g$ denote the $C^{k+1}$ uniformly concave function locally representing $\Omega$ around $x$. Set $$F(y_1,\ldots,y_n)=y_n-f(y_1,\ldots,y_{n-1}),$$ $$G(x_1,\ldots,x_n)=x_n-g(x_1,\ldots,x_{n-1})$$ and consider the function $$\phi: \mathbb{R}^n \times \mathbb{R}^n \times \mathbb{R} \rightarrow \mathbb{R}^{n+3}$$ given by $$\phi(x,y,t):=(G(x), F(y), \nabla G(x) \cdot \nabla F(y), y+t\nabla F(y) - x).$$ Geometric considerations imply that locally $\partial P_\Lambda(\Omega) = \phi^{-1}(0,0,0,0)$: indeed, $\{F(y)=0\}$ locally describes the boundary of $\Lambda$ and $\{G(x)=0\}$ that of $\Omega$; if $\nabla G(x) \cdot \nabla F(y)=0$, then the normal of $\Lambda$ at $y$ is orthogonal to the normal of $\Omega$ at $x$, and this implies that $y=P_\Lambda(x)$ is a boundary point of $P_\Lambda(\Omega)$; note that in this case, $t=t(x,y)=|x-y|/|\nabla F(y)|$ and the positive separation implies $t>0$. Our goal is to investigate the differential of this map in order to apply the implicit function theorem. With this in mind, let 
\begin{align*}
\phi_1(x,y,t)&:=G(x)\\
\phi_2(x,y,t)&:=F(y)\\
\phi_3(x,y,t)&:=\nabla G(x)\cdot \nabla F(y)\\
\Phi(x,y,t)&:=[\phi_4(x,y,t),\ldots,\phi_{n+3}(x,y,t)]^T:=y + t\nabla F(y) - x. 
\end{align*}
Thus, 

\begin{align*}
\nabla_x \phi_1&=\nabla G(x)\\
\nabla_y \phi_1&=0\\
\partial_t \phi_1&=0\\
\nabla_x \phi_2&=0\\
\nabla_y \phi_2&=\nabla F(y)\\
\partial_t \phi_2&=0\\
\nabla_x \phi_3&=D^2G(x) \nabla F(y)\\
\nabla_y \phi_3&=D^2F(y) \nabla G(x)\\
\partial_t \phi_3&=0\\
D_x \Phi&=-Id \in \mathbb{R}^{n\times n}\\
D_y \Phi&=Id +tD^2F(y) \in \mathbb{R}^{n\times n}\\
D_t \Phi&= \nabla F(y). 
\end{align*}
Therefore, 
$$D\Phi(x,y,t) = \begin{bmatrix} \nabla G(x)^T & 0 & 0 \\ 0 & \nabla F(y)^T & 0 \\ (D^2 G(x) \nabla F(y))^T & (D^2 F(y)\nabla G(x))^T & 0 \\-Id & Id+tD^2 F(y) & \nabla F(y) \end{bmatrix}$$ (note that this is an $(n+3) \times (2n+1)$ matrix). The strategy now is to prove $\ker(D \Phi)^T = \{0\}$ at points $(x,y,t) \in \phi^{-1}(0,0,0,0)$. Indeed, let $$(\alpha_1,\alpha_2,\alpha_3,v) \in \ker(D\Phi)^T,$$ and note that since 
$$D\Phi(x,y,t)^T = \begin{bmatrix} \nabla G(x) & 0 & D^2G(x) \nabla F(y) & -Id\\ 0 & \nabla F(y) & D^2 F(y) \nabla G(x) & Id+tD^2F(y)\\ 0 & 0 & 0 & \nabla F^T(y) \end{bmatrix},$$ we have 

\begin{align}
0&=\alpha_1 \nabla G(x) + \alpha_3 D^2 G(x) \nabla F(y) - v;\label{one}\\
0&=\alpha_2 \nabla F(y) + \alpha_3 D^2F(y)\nabla G(x) + v + t D^2F(y) v  \label{two};\\
 0&=\nabla F(y) \cdot v.
\end{align}
In particular, 

\begin{align*}
0&=\nabla F(y) \cdot v\\
&=\nabla F(y) \cdot (\alpha_1 \nabla G(x) + \alpha_3 D^2 G(x) \nabla F(y))\\
&=\alpha_2 \nabla F(y) \cdot \nabla G(x)+\alpha_3 \nabla F^T(y) D^2 G(x) \nabla F(y)\\
&=\alpha_3 \nabla F^T(y) D^2 G(x) \nabla F(y),
\end{align*}
(note $\nabla F(y) \cdot \nabla G(x) = 0$ since $(x,y,t) \in \phi^{-1}(0,0,0,0)$). 
Since $G$ is uniformly convex, it follows that $\alpha_3=0$ and so (\ref{one}) implies $$v=\alpha_1 \nabla G(x);$$ plugging this information into (\ref{two}) and taking a dot product with $\nabla G(x)$ yields $$0=\alpha_1\big(t \nabla G(x)^T D^2 F(y) \nabla G(x) + |\nabla G(x)|^2\big).$$ Since $|\nabla G(x)|>0$, and $F$ is convex, it follows that $\alpha_1=0$ which readily implies $v=0$ and so $\alpha_2=0$. Thus, we proved $\ker(D \Phi^T) = \{0\}$; in particular, $rank(D \Phi) = n+3$ for each point of interest $(x,y,t)$. We may now use the implicit function theorem to conclude.                 
\end{proof}            

\begin{remark}
The disjointness assumption in Theorem \ref{sc} is necessary, cf. Remark \ref{cant}.  
\end{remark}

\section{The singular set associated to a Monge-Amp\`{e}re equation}   \label{APP}

In this section, a connection is established between the illumination shadow, the projection shadow, and the singular set associated to a Monge-Amp\`ere equation arising in mass transfer theory. More precisely, we apply the results of the previous sections to improve a result of Indrei \cite{I} (see \S \ref{MAP} for a description of the optimal partial transport problem and relevant notation). 

\subsection{The structure of the singular set} 
In order to analyze the singular set for the free boundaries, we recall two sets which play a crucial role in the subsequent analysis; cf. \cite[Equations (2.2) and (2.3)]{I}. The nonconvex part of the free boundary $\overline{\partial U_m \cap \Omega}$ is the closed set 
\begin{equation} \label{nc}
\partial_{nc} U_m:=\{x \in \overline{\Omega \cap U_m}: \Omega \cap U_m \hskip.1in \text{fails to be locally convex at $x$}\}.
\end{equation}  
Moreover, the nontransverse intersection points are defined by 
\begin{equation} \label{nt}
\partial_{nt} \Omega:= \{x \in \partial \Omega \cap \overline{\Omega \cap \partial U_m}: \langle \nabla \Psi_m(x)-x, z-x\rangle \leq 0 \hskip .1in \forall z \in \Omega \}, 
\end{equation}
where $\tilde \Psi_m$ is the extension of $\Psi_m$ given by \cite[Theorem 4.10]{AFi}. By duality, $\partial_{nc} V_m$ and $\partial_{nt} \Lambda$ are similarly defined. Now, for $x\in \partial (\Omega \cap U_m)$ let\\
\vskip .1in 
\noindent $L(x):=\Bigl\{\nabla \tilde \Psi_m(x)+ \frac{x-\nabla \tilde \Psi_m(x)}{|x-\nabla \tilde \Psi_m(x)|}t: t\geq0 \Bigr\};$\\
\noindent
\vskip .1in
\noindent $K:=\Bigl\{x \in \partial (\Omega \cap U_m): L(x) \cap \overline{\Omega \cap U_m} \subset \partial (\overline{\Omega \cap U_m}) \Bigr\};$\\
\vskip .1in
\noindent $S_1:=\nabla \tilde \Psi_m^{-1}(\partial_{nt} \Lambda) \cap K;$   \\
\vskip .1in  
\noindent $A_1:=S_1 \cap \partial U_m$; \\

\vskip .1in 
\noindent $A_2:=S_1\setminus \partial U_m.$\\  

\noindent The singular set of the free boundary $\overline{\partial V_m \cap \Lambda}$ is 
\begin{align*}
S &= (\nabla \tilde \Psi_m(\partial_{nc}U_m) \cup \nabla \tilde \Psi_m(S_1)) \cap \partial V_m \cap \partial \Lambda \large\\
&= (\nabla \tilde \Psi_m(\partial_{nc}U_m) \cap \partial V_m \cap \partial \Lambda \large) \cup (\nabla \tilde \Psi_m(S_1) \cap \partial V_m \cap \partial \Lambda) \\
&=(\nabla \tilde \Psi_m(\partial_{nc}U_m) \cap \partial V_m \cap \partial \Lambda \large) \cup \nabla \tilde \Psi_m(A_1) \cup \nabla \tilde \Psi_m(A_2), \\
\end{align*} 
see \cite[Theorem 4.9]{I}. 
The next lemma describes the first two sets appearing in $S$. 
\begin{lemma} \label{lehk}
Assume $\Omega \subset \mathbb{R}^n$ and $\Lambda \subset \mathbb{R}^n$ are strictly convex bounded domains with disjoint closures. Then $$(\nabla \tilde \Psi_m(\partial_{nc}U_m) \cap \partial V_m \cap \partial \Lambda \large) \cup \nabla \tilde \Psi_m(A_1)$$ is $\mathcal{H}^{n-2}$ $\sigma$-finite. Moreover, if $\Omega$ is $C^1$, then $$\mathcal{H}^{n-2}((\nabla \tilde \Psi_m(\partial_{nc}U_m) \cap \partial V_m \cap \partial \Lambda \large) \cup \nabla \tilde \Psi_m(A_1)) < \infty.$$   
\end{lemma}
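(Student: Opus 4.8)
The plan is to handle the two pieces of the set $\bigl(\nabla \tilde\Psi_m(\partial_{nc}U_m)\cap\partial V_m\cap\partial\Lambda\bigr)\cup\nabla\tilde\Psi_m(A_1)$ separately, and for each one to exhibit a covering by countably many Lipschitz images of subsets of $\RR^{n-2}$, which is exactly what $\mathcal H^{n-2}$ $\sigma$-finiteness requires. First I would recall from \cite[\S2, \S4]{I} the structural facts that are already available: $\partial_{nc}U_m$ carries a natural $(n-2)$-dimensional stratification (it is the nonconvex part of the free boundary, and by the results in \cite{AFi, I} it is contained in a countable union of graphs of semiconvex/semiconcave functions of $n-2$ variables), and on $\partial_{nc}U_m$ the map $\nabla\tilde\Psi_m$ coincides with a Lipschitz (indeed, locally the identity-plus-bounded) map because the potential is $C^{1,1}$ there up to the boundary via \cite[Theorem 4.10]{AFi}. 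Pushing a countable family of Lipschitz $(n-2)$-charts forward by a locally Lipschitz map keeps the family countable and Lipschitz, so $\nabla\tilde\Psi_m(\partial_{nc}U_m)$, hence its intersection with $\partial V_m\cap\partial\Lambda$, is $\mathcal H^{n-2}$ $\sigma$-finite; if in addition $\Omega$ is $C^1$ one gets the quantitative bound because the relevant charts then live on a fixed compact piece of $\partial\Omega$ and the Lipschitz constants and the number of charts can be taken uniform, yielding finite total $\mathcal H^{n-2}$ measure.

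The genuinely new input is needed for $\nabla\tilde\Psi_m(A_1)$. Here the key observation, advertised in the introduction, is that $\nabla\tilde\Psi_m(A_1)$ is \emph{trapped on the boundary of a projection shadow}. Concretely, $A_1=S_1\cap\partial U_m\subset \nabla\tilde\Psi_m^{-1}(\partial_{nt}\Lambda)\cap K$, and the nontransversality/cone condition defining $K$ together with the definition of $\partial_{nt}\Lambda$ forces the ray $L(x)$ to be tangent to $\Omega$ and to meet $\partial\Lambda$ orthogonally; in other words, the image point $\nabla\tilde\Psi_m(x)\in\partial\Lambda$ is a point of $\partial\Lambda$ through which the normal line to $\Lambda$ is tangent to $\Omega$, i.e. $\nabla\tilde\Psi_m(x)\in\partial P_\Lambda(\Omega)$. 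This is precisely the identification $\nabla\tilde\Psi_m(A_1)\subset \partial P_\Lambda(\Omega)$. Since $\Omega$ is a bounded strictly convex domain, $\Lambda$ is convex with $C^{1,1}$ boundary (strict convexity and boundedness of $\Lambda$ give more than enough), and $\overline\Omega\cap\overline\Lambda=\emptyset$, Theorem \ref{wc} applies and tells us that $\partial P_\Lambda(\Omega)$ is finitely $(n-2)$-rectifiable; in particular $\mathcal H^{n-2}(\partial P_\Lambda(\Omega))<\infty$, so $\mathcal H^{n-2}(\nabla\tilde\Psi_m(A_1))<\infty$ outright — no $C^1$ hypothesis on $\Omega$ is needed for this half, which is why the lemma states $\sigma$-finiteness in general and the $C^1$ assumption is only invoked to upgrade the $\partial_{nc}$ piece.

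Combining the two: the union of a $\sigma$-finite set and a finite-measure set is $\sigma$-finite, giving the first assertion; and under the extra $C^1$ assumption on $\Omega$ the $\partial_{nc}$ piece has finite measure as well, so the union has finite $\mathcal H^{n-2}$ measure, giving the second assertion. The main obstacle, and the step I would spend the most care on, is the geometric identification $\nabla\tilde\Psi_m(A_1)\subset\partial P_\Lambda(\Omega)$: one must check carefully that the cone condition in the definition of $K$, the nontransversality inequality in \eqref{nt} (transported to $\Lambda$ by duality), and the fact that $\nabla\tilde\Psi_m$ gives the direction of the normal to the free boundary together imply both that the segment from $x$ to $\nabla\tilde\Psi_m(x)$ is directed along $N_\Lambda(\nabla\tilde\Psi_m(x))$ and that this segment, extended, is tangent to $\partial\Omega$ rather than entering $\Omega$ — i.e. that $\nabla\tilde\Psi_m(x)$ is on the \emph{boundary} of the shadow and not its interior. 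This uses strict convexity of $\Omega$ in an essential way (to rule out the line meeting $\inter\Omega$) and is the place where the hypotheses of the lemma are actually consumed; the rest is bookkeeping with countable covers and Lipschitz pushforwards, together with a direct appeal to Theorem \ref{wc}.
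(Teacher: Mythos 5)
You have the right overall scaffolding (split the set into two pieces and estimate each), but the key geometric identification is applied to the wrong piece, and the hypothesis-accounting is reversed as a result.

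Your central claim is that $\nabla\tilde\Psi_m(A_1)\subset\partial P_\Lambda(\Omega)$, and that this makes $\nabla\tilde\Psi_m(A_1)$ finitely $(n-2)$-rectifiable via Theorem \ref{wc} with no need for $C^1$. But the shadow containment is the content of Lemma \ref{yay}, which concerns $A_2=S_1\setminus\partial U_m$, \emph{not} $A_1=S_1\cap\partial U_m$. The distinction is essential: for $x\in A_2$ one has $x\in\partial\Omega\setminus\partial U_m$, so near $x$ the active region agrees with $\Omega$, and the $K$-condition ($L(x)\cap\overline{\Omega\cap U_m}\subset\partial(\overline{\Omega\cap U_m})$) together with convexity forces $L(x)$ to lie in a tangent plane of $\Omega$; hence $\nabla\tilde\Psi_m(x)=P_\Lambda(x)\in\partial P_\Lambda(\Omega)$. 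For $x\in A_1$ one only knows $x\in\partial U_m$, so the $K$-condition merely prevents $L(x)$ from entering the interior of $\Omega\cap U_m$; $L(x)$ may well enter $\Omega\setminus\overline{U_m}$, in which case $\nabla\tilde\Psi_m(x)$ lands in the \emph{interior} of the projection shadow, not its boundary. So the argument you built around Theorem \ref{wc} does not apply to $A_1$. The paper instead disposes of $\nabla\tilde\Psi_m(A_1)$ by citing \cite[Corollary 4.6]{I}, which gives $\mathcal H^{n-2}$ $\sigma$-finiteness in general and upgrades to $\mathcal H^{n-2}$-finiteness exactly when $\Omega$ is $C^1$ — so the $C^1$ hypothesis lives in the $A_1$ piece, the opposite of what your plan asserts.

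Your treatment of the $\partial_{nc}$ piece is also off the track the paper takes, and in a way that matters. You propose to $(n-2)$-stratify $\partial_{nc}U_m$ itself and push forward by a Lipschitz $\nabla\tilde\Psi_m$, but this would be trying to prove too much: $\partial_{nc}U_m$ sits inside the $(n-1)$-dimensional free boundary and there is no off-the-shelf $(n-2)$-stratification of it. What the paper actually does is exploit the intersection with $\partial V_m\cap\partial\Lambda$: setting $x=\nabla\tilde\Psi_m^*(y)$, convexity of $\Omega$ rules out $x\in\partial\Omega\setminus\partial U_m$ (since $x\in\partial_{nc}U_m$), and \cite[Proposition 2.15]{I} (free boundary never maps to free boundary) rules out $x\in\partial U_m\cap\Omega$; hence $x\in\partial U_m\cap\partial\Omega$, which is an intersection of two $(n-1)$-dimensional sets and is then handled by \cite[Proposition 4.8]{I} directly, giving $\mathcal H^{n-2}$-finiteness (not merely $\sigma$-finiteness) for this piece with no extra hypothesis. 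So the reduction to $\partial U_m\cap\partial\Omega$ is the idea you are missing here, and the $C^1$ assumption is not used for this piece at all. Swap the roles of the two pieces, replace the shadow argument for $A_1$ by \cite[Corollary 4.6]{I} (reserving the shadow argument and Theorem \ref{wc} for $A_2$ in Lemma \ref{yay}), and replace the stratification heuristic by the reduction to $\partial U_m\cap\partial\Omega$, and the proof will line up with the paper's.
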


\begin{proof}
For $y \in (\nabla \tilde \Psi_m(\partial_{nc}U_m) \cap \partial V_m \cap \partial \Lambda \large)$ set $x:=\nabla \tilde \Psi_m^*(y)$; since $\Omega$ is convex and $x\in \partial_{nc}U_m$, it follows that $x \notin \partial \Omega \setminus \partial U_m$. Moreover, since free boundary never maps to free boundary (see e.g. \cite[Proposition 2.15]{I}), we also have $x \notin \partial U_m \cap \Omega$, which implies $x \in \partial U_m \cap \partial \Omega$. Therefore, $$(\nabla \tilde \Psi_m(\partial_{nc}U_m) \cap \partial V_m \cap \partial \Lambda \large) \subset \nabla \tilde \Psi_m(\partial U_m \cap \partial \Omega)\cap \partial V_m \cap \partial \Lambda.$$ An application of \cite[Proposition 4.8]{I} yields that $$\nabla \tilde \Psi_m(\partial_{nc}U_m) \cap \partial V_m \cap \partial \Lambda \large$$ is $\mathcal{H}^{n-2}$ - finite; the fact that $\nabla \tilde \Psi_m(A_1)$ is $\mathcal{H}^{n-2}$ $\sigma$-finite ($\mathcal{H}^{n-2}$ finite if $\Omega$ is $C^1$) follows from \cite[Corollary 4.6]{I}.  
\end{proof}

In the following lemma, we establish a connection between the singular set $S$ and the boundary of the projection of $\Omega$ onto $\Lambda$ studied in  \S \ref{convex shadow}. 

\begin{lemma} \label{yay} Assume $\Omega \subset \mathbb{R}^n$ and $\Lambda \subset \mathbb{R}^n$ are strictly convex bounded domains with disjoint closures. Then 

\begin{equation} \label{dfg}
\nabla \tilde \Psi_m(A_2)\subset \partial P_\Lambda(\Omega).
\end{equation}
 
\end{lemma}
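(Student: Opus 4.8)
The plan is to unpack the definition of $A_2$ and show that every point of $\nabla\tilde\Psi_m(A_2)$ arises as the orthogonal projection onto $\partial\Lambda$ of a point of $\overline\Omega$ along the normal direction of $\Lambda$. Recall $A_2 = S_1\setminus\partial U_m = \nabla\tilde\Psi_m^{-1}(\partial_{nt}\Lambda)\cap K\setminus\partial U_m$. So fix $x\in A_2$ and set $y:=\nabla\tilde\Psi_m(x)$. Then $y\in\partial_{nt}\Lambda$, which by the duality form of \eqref{nt} means $y\in\partial\Lambda$ and $\langle\nabla\tilde\Psi_m^*(y)-y,z-y\rangle\le 0$ for all $z\in\Lambda$; in particular, writing $x'=\nabla\tilde\Psi_m^*(y)$ (which equals $x$ on the set where $\nabla\tilde\Psi_m$ is invertible), the vector $x-y$ is an outer normal to $\Lambda$ at $y$. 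First I would record this: the segment from $y$ to $x$ points along $N_\Lambda(y)$, so $y$ is exactly the point where the ray from $x$ in the direction $-N_\Lambda(y)$ first meets $\partial\Lambda$, i.e. $y=P_\Lambda(x)$, provided $x\in\overline\Omega$.

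Next I would argue $x\in\overline\Omega$. Since $x\in\partial(\Omega\cap U_m)$ and $x\notin\partial U_m$ (because $x\in A_2=S_1\setminus\partial U_m$), the only possibility left is $x\in\partial\Omega$ (a boundary point of $\Omega\cap U_m$ that is not on $\partial U_m$ must lie on $\partial\Omega$); in any case $x\in\overline\Omega$. Combined with the previous paragraph, $y=P_\Lambda(x)$ with $x\in\overline\Omega$, hence $y\in P_\Lambda(\overline\Omega)$. The remaining point is to upgrade ``$y\in P_\Lambda(\overline\Omega)$'' to ``$y\in\partial P_\Lambda(\Omega)$''. Here I would use that $y\in\partial_{nt}\Lambda$ forces the contact to be \emph{tangential}: the ray $L(x)$ from $\nabla\tilde\Psi_m(x)$ back through $x$ — together with the condition $x\in K$, which says $L(x)\cap\overline{\Omega\cap U_m}\subset\partial(\overline{\Omega\cap U_m})$ — shows that the normal line to $\Lambda$ at $y$ supports $\Omega$ at $x$ rather than entering its interior. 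Since the projection is along the normal of $\Lambda$, a point of $\partial\Lambda$ lies in the interior of $P_\Lambda(\Omega)$ only if the normal line at that point crosses the interior of $\Omega$; the tangency just established rules this out, so $y\in\partial P_\Lambda(\Omega)$. Finally, $\nabla\tilde\Psi_m(A_2)$ is the image of this construction over all $x\in A_2$, giving \eqref{dfg}.

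The main obstacle I expect is the last step: carefully justifying that the nontransversality condition $y\in\partial_{nt}\Lambda$ together with $x\in K$ really does force the normal line to $\Lambda$ at $y$ to be a supporting (tangent) line of $\Omega$ at $x$, and then that this tangency is exactly what places $y$ on $\partial P_\Lambda(\Omega)$ rather than in its interior. This is where the geometry of $K$, $L(x)$, and $\partial_{nt}$ has to be used precisely, and where one must be careful that $\nabla\tilde\Psi_m^*(y)=x$ (so that $x-y$ genuinely is normal to $\Lambda$ at $y$) — this identity holds on $S_1$ by the definitions, but it should be spelled out. Strict convexity of $\Omega$ and $\Lambda$, plus the disjoint-closure hypothesis guaranteeing $x\ne y$ and a well-defined positive hitting time, will be invoked here; no further regularity is needed for this lemma, since it is purely about inclusions of sets and the rectifiability input comes later from Theorem \ref{wc}.
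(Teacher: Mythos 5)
Your proposal is correct and follows essentially the same route as the paper's proof: you identify $y=P_\Lambda(x)$ via the nontransversality condition $y\in\partial_{nt}\Lambda$ (which makes $x-y$ normal to $\Lambda$ at $y$, with $x=\nabla\tilde\Psi_m^*(y)$), you deduce $x\in\partial\Omega$ from $x\in\partial(\Omega\cap U_m)\setminus\partial U_m$, and you use the $K$-condition to show the ray $L(x)$ is tangent to $\Omega$ at $x$, which places $y$ on the boundary rather than the interior of the projection shadow. The paper makes the step $y=P_\Lambda(x)$ slightly more explicit by introducing $z=P_\Lambda(x)$ and invoking uniqueness of projection to conclude $z=y$, but this is a cosmetic difference.
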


\begin{proof}
Let $y:=\nabla \tilde \Psi_m(x) \in \nabla \tilde \Psi_m(A_2)$, $L_t:=\nabla \tilde \Psi_m(x)+ \frac{x-\nabla \tilde \Psi_m(x)}{|x-\nabla \tilde \Psi_m(x)|}t$ and note that the half-line $\{L_t\}_{t\geq0}$ is tangent to the active region. Since $x \in \partial \Omega \setminus \partial U_m$, it follows that $L_t$ is tangent to $\Omega$ at $x$; hence, it is on a tangent space to $\Omega$ at $x$. Let $z=P_\Lambda(x) \in \partial \Lambda$ (recall that $P_\Lambda$ is the orthogonal projection operator). Then by the properties of the projection (and the convexity of $\Lambda$), $x-z$ is parallel to some normal $N_\Lambda(z)$ of $\Lambda$ at $z$. Since $x \in S_1$, it follows that $\nabla \tilde \Psi_m(x) \in \partial_{nt} \Lambda$; in particular, $x-\nabla \tilde \Psi_m(x)$ is parallel to $N_\Lambda(\nabla \tilde \Psi_m(x))$. Thus, by uniqueness of the projection, it readily follows that $z=\nabla \tilde \Psi_m(x)=y$. Combining $\{L_t\}_{t\geq0} \subset \mathbb{T}_x \Omega$ and $y=P_\Lambda(x)$ yields $y \in \partial P_\Lambda(\Omega)$.              
\end{proof}

Lemmas \ref{lehk} \& \ref{dfg} imply that the singular set $S$ is contained in the union of an $\mathcal{H}^{n-2}$ $\sigma$-finite set and $\partial P_\Lambda(\Omega)$ under a strict convexity and disjointness assumption on the domains. Thus, a way to obtain bounds on the Hausdorff dimension of the singular set is by studying the Hausdorff dimension of $\partial P_\Lambda(\Omega)$. In \cite[Proposition 4.1]{I}, Indrei shows that if $\Omega$ is a bounded convex domain and $\Lambda$ is uniformly convex, bounded, and $C^{1,1}$ smooth, then $P_\Lambda(\Omega) \cap \partial \Lambda$ is $(n-2)$-rectifiable away from $\partial(\partial(\Omega \cap \Lambda) \cap \partial \Lambda)$; in particular, if the domains have disjoint closures, then $$\mathcal{H}^{n-2}(\partial P_\Lambda(\Omega))<\infty.$$ The proof of \cite[Proposition 4.1]{I} is technical but relies on a simple idea which we describe in the language developed in this paper in order to further highlight the connection with shadows: let $y \in \partial P_\Lambda(\Omega)$ and $x \in \partial \Omega$ be such that $y=P_\Lambda(x)$. Then $\partial P_\Lambda(\Omega) \subset \partial \Lambda \setminus S_{N_\Omega(x)}$, where $N_\Omega(x)$ is any normal of $\Lambda$ at the point $x$ and $S_{N_\Omega(x)}$ is the shadow from \S \ref{PaI}. In other words, $P_\Lambda(\Omega)$ is trapped in the illuminated portion of $\partial \Lambda$ under parallel illumination in the direction $N_\Omega(x)$. Since $y \in \partial P_\Lambda(\Omega)\cap \partial S_{N_\Omega(x)}$, it follows that $\partial S_{N_\Omega(x)}$ acts as a one-sided support for $\partial P_\Lambda(\Omega)$ locally around $y$. Therefore, if one can place a cone in the shadow portion $S_{N_\Omega(x)}$, a compactness argument would yield the desired rectifiability result. Indeed, this is where the uniform convexity and $C^{1,1}$ assumptions come into play in the proof of \cite[Proposition 4.1]{I}. However, the results of \S \ref{PaI} shed new light on the regularity and uniform convexity assumptions; more specifically, they imply that by this method, the uniform convexity assumption is necessary to obtain the desired cone and the $C^{1,1}$ regularity assumption is irrelevant: indeed, \S \ref{sh1} shows that there exists a $C^\infty$ strictly convex set whose shadow is H\"older, but not Lipschitz (with arbitrarily small H\"older exponent). In particular, this shows that one may not hope to remove the uniform convexity assumption by the same method (i.e. by using $\partial S_{N_\Omega(x)}$ as a support function). However, Theorem \ref{wc} implies that one one may obtain the cone without a uniform convexity assumption; this is achieved by cooking up a new type of support function related to the distance between the two sets. Moreover, Theorem \ref{sc} yields a higher regularity result. With this discussion in mind, we obtain the following theorem which improves \cite[Theorem 4.9]{I}:

\begin{theorem} \label{singset} Assume $\Omega \subset \mathbb{R}^n$, $\Lambda \subset \mathbb{R}^n$ are bounded strictly convex domains and that $\Lambda$ has a $C^{1,1}$ boundary. If $\overline \Omega \cap \overline \Lambda = \emptyset$, then the free boundary $\overline{\partial V_m \cap \Lambda}$ is a $C_{loc}^{1,\alpha}$ hypersurface away from the compact, $\mathcal{H}^{n-2}$ $\sigma$-finite set: $$S:= (\nabla \tilde \Psi_m(\partial_{nc}U_m) \cap \partial V_m \cap \partial \Lambda \large) \cup \nabla \tilde \Psi_m(A_1) \cup \nabla \tilde \Psi_m(A_2).$$ If $\Omega$ has a $C^1$ boundary, then $S$ is $\mathcal{H}^{n-2}$ finite. Moreover, if $\Omega$ and $\Lambda$ are $C^{k+1}$, $k\geq1$, and $\Omega$ is uniformly convex, then $\nabla \tilde \Psi_m(A_2)$ is contained on an $(n-2)$-dimensional $C_{loc}^{k}$ hypersurface. 
\end{theorem}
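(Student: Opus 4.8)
The plan is to assemble the statement from three ingredients already available: the structural decomposition of the singular set $S$ recorded in \cite[Theorem 4.9]{I}, the measure-theoretic estimates of Lemma \ref{lehk} together with the inclusion of Lemma \ref{yay}, and the shadow-regularity results of Theorems \ref{wc} and \ref{sc}. First I would invoke \cite[Theorem 4.9]{I}: since $\overline\Omega \cap \overline\Lambda = \emptyset$ we are in its separated regime, and that reference furnishes both the $C^{1,\alpha}_{\loc}$ regularity of $\overline{\partial V_m \cap \Lambda}$ away from $S$ and the splitting $S = (\nabla \tilde \Psi_m(\partial_{nc}U_m) \cap \partial V_m \cap \partial \Lambda) \cup \nabla \tilde \Psi_m(A_1) \cup \nabla \tilde \Psi_m(A_2)$. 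The delicate point to check is that the uniform convexity of $\Lambda$ imposed in \cite{I} enters only in the cone construction used to bound the size of the singular set, and not in the derivation of the $C^{1,\alpha}$ regularity, so that the regularity portion of \cite[Theorem 4.9]{I} remains valid under the present strict convexity hypothesis. Compactness of $S$ follows because $S$ is a relatively closed subset of the compact set $\overline{\partial V_m \cap \Lambda}$; alternatively each of its three pieces is a closed bounded subset of $\partial\Lambda$ (a continuous image of a compact set intersected with closed sets, in the first two cases, and a subset of the topological boundary $\partial P_\Lambda(\Omega)$ of a bounded set, in the third).

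Next I would estimate the $\mathcal{H}^{n-2}$-size of the pieces. Lemma \ref{lehk} gives directly that $(\nabla \tilde \Psi_m(\partial_{nc}U_m) \cap \partial V_m \cap \partial \Lambda) \cup \nabla \tilde \Psi_m(A_1)$ is $\mathcal{H}^{n-2}$ $\sigma$-finite, and $\mathcal{H}^{n-2}$-finite when $\partial\Omega$ is $C^1$. For the remaining piece, Lemma \ref{yay} provides the key inclusion $\nabla \tilde \Psi_m(A_2) \subset \partial P_\Lambda(\Omega)$, and then Theorem \ref{wc} applies verbatim, since $\Omega$ is bounded and strictly convex, $\partial\Lambda$ is $C^{1,1}$, and the closures are disjoint; it yields that $\partial P_\Lambda(\Omega)$ is finitely $(n-2)$-rectifiable, whence $\mathcal{H}^{n-2}(\nabla \tilde \Psi_m(A_2)) < \infty$. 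Combining, $S$ is the union of an $\mathcal{H}^{n-2}$ $\sigma$-finite set and an $\mathcal{H}^{n-2}$-finite set, hence $\mathcal{H}^{n-2}$ $\sigma$-finite; and if in addition $\partial\Omega$ is $C^1$ it is a union of two $\mathcal{H}^{n-2}$-finite sets, hence $\mathcal{H}^{n-2}$-finite. This settles the first two assertions.

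For the last assertion I would first observe that two disjoint bounded convex domains are strictly separated by a hyperplane (Hahn--Banach), so that under the stronger hypotheses, with $\Omega$ and $\Lambda$ of class $C^{k+1}$ and $\Omega$ bounded and uniformly convex, Theorem \ref{sc} applies and shows $\partial P_\Lambda(\Omega)$ is locally an $(n-2)$-dimensional $C^{k}_{\loc}$ hypersurface. Since $\nabla \tilde \Psi_m(A_2) \subset \partial P_\Lambda(\Omega)$ by Lemma \ref{yay}, the set $\nabla \tilde \Psi_m(A_2)$ is contained in such a hypersurface, which is the claim.

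The main obstacle is the bookkeeping in the first step: one must verify with care that the $C^{1,\alpha}_{\loc}$ regularity of $\overline{\partial V_m \cap \Lambda}$ off $S$ in \cite[Theorem 4.9]{I} does not implicitly consume the uniform convexity of $\Lambda$, and that the decomposition of $S$ used here coincides exactly with the one produced there, including the precise roles of $\partial_{nc}U_m$, $A_1$, and $A_2$. Once this matching is pinned down, the remaining steps are a direct application of Lemmas \ref{lehk} and \ref{yay} and Theorems \ref{wc} and \ref{sc}, and no further difficulties arise.
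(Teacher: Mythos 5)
Your proposal matches the paper's intended argument exactly: the paper states Theorem \ref{singset} after precisely the discussion you reconstruct — the decomposition of $S$ is taken from \cite[Theorem 4.9]{I}, Lemma \ref{lehk} handles the $\sigma$-finiteness (and finiteness when $\partial\Omega$ is $C^1$) of the first two pieces, Lemma \ref{yay} gives $\nabla\tilde\Psi_m(A_2)\subset\partial P_\Lambda(\Omega)$, Theorem \ref{wc} supplies the finite $(n-2)$-rectifiability of $\partial P_\Lambda(\Omega)$, and Theorem \ref{sc} (applicable because two disjoint bounded convex sets can be strictly separated by a hyperplane) gives the $C^k$ hypersurface containment in the smooth uniformly convex case. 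Your flagged caveat — that the $C^{1,\alpha}_{\loc}$ regularity portion of \cite[Theorem 4.9]{I} must not secretly rely on uniform convexity of $\Lambda$ — is a legitimate point of due diligence that the paper treats as settled; beyond that your route is the same as the paper's.
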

 
\begin{remark}
By duality and symmetry, an analogous statement holds for $\overline{\partial U_m \cap \Omega}$.     
\end{remark}

\begin{remark}
One may remove the disjointness assumption and obtain corresponding results by utilizing the method in \cite[\S 4]{I}. 
\end{remark}

\emph{Acknowledgments.}
This work was completed while the first author was a Huneke Postdoctoral Scholar at the Mathematical Sciences Research Institute in Berkeley, California during the 2013 program ``Optimal Transport: Geometry and Dynamics," and while the second author was a Postdoctoral Fellow at the Instituto Superior T\'ecnico. The excellent research environment provided by the University of Texas at Austin, Australian National University, MSRI, and Instituto Superior T\'ecnico is kindly acknowledged.

\pagebreak

\signei

\signln

\end{document}